\newtheorem{thm}{Theorem}[section]
\newtheorem{lem}{Lemma}[section]
\newtheorem{pro}{Proposition}[section]
\newtheorem{cor}{Corollary}[section]
\newtheorem*{conj}{Conjecture}
\theoremstyle{remark}
\newtheorem*{rema}{Remark}
\theoremstyle{definition}
\newtheorem{defn}{Definition}[section]
\newcommand{\R}{\mathbb{R}}
\newcommand{\C}{\mathbb{C}}
\begin{document}

\title[commutativity up to a factor]{Commutativity up to a factor for bounded and unbounded operators}
\author[Chellali and Mortad]{Chérifa Chellali and Mohammed Hichem Mortad$^*$}

\thanks{$^*$ Corresponding author, who is partially supported by "Laboratoire d'Analyse Mathématique et Applications"}

\keywords{Commutativity up to a factor; Normal and self-adjoint
operators; Fuglede-Putnam theorem; Bounded and unbounded operators}

\address{Department of
Mathematics, University of Oran, B.P. 1524, El Menouar, Oran 31000.
Algeria.\newline {\bf Mailing address (for the corresponding
author)}:
\newline Prof. Dr Mohammed Hichem Mortad \newline BP 7085
Es-Seddikia\newline Oran
\newline 31013 \newline Algeria}

\email{chchellali@gmail.com.}

\email{mhmortad@gmail.com.}

\begin{abstract}
In this paper, we further investigate the problem of commutativity
up to a factor (or $\lambda$-commutativity) in the setting of
bounded and unbounded linear operators in a complex Hilbert space.
The results are based on a new approach to the problem. We finish
the paper by a conjecture on the commutativity of self-adjoint
operators.
\end{abstract}

\maketitle

\section{Introduction}
Commutation relations between self-adjoint operators in a complex
Hilbert space are important in the interpretation of quantum
mechanical observables. They also play an important role when
analyzing their spectra. For more details see \cite{BBP} (and the
references therein), \cite{Davies-book-quantum-measurement} and
\cite{Putnam-book-1967}.

Recently, commutativity up to a factor has been given much attention
by many authors. See e.g. \cite{BBP}, \cite{CHO-JUN-TAKAEAKI},
\cite{khalagai-Kavila}, \cite{Mortad-ZAA-2010} and \cite{YANGDU}.

The purpose of the present paper is twofold. First, we recover known
results by examining bounded normal products of self-adjoint
operators. Second, we extend this method to unbounded operators.

Let us say a little more about details of this technique. It is
well-known that two bounded, normal and commuting operators have a
normal product. The proof uses the celebrated Fuglede theorem (we
note that this question has been generalized to the case of
unbounded operators in e.g. \cite{Mortad-Gustafson-Bull-SM},
\cite{Mortad-Demm-math}, \cite{Mortad-ex-Madani} and
\cite{Mortad-Operators-matrices}). In a very similar manner, we also
notice that -this time via the Fuglede-Putnam theorem- the product
of two anti-commuting normal operators remains normal. So, we
conjectured that the product of normal operators which commute up to
a factor would be normal. This is in effect the case and the reason
why we want to use the normality of the product in question is that
we may exploit results on the bounded normal product of self-adjoint
operators (as those in \cite{Mortad-PAMS2003} and
\cite{Mortad-IEOT-2009}, and the references therein). The advantage
of this approach is that it also extends to unbounded operators so
that we may again take advantage of the results in
\cite{Mortad-PAMS2003} and \cite{Mortad-IEOT-2009}.

To make the paper as self-contained as possible, we recall the
following results:

\begin{thm}\label{adjoints and closedness AB basic}
Let $A$ be a densely defined unbounded operator.
\begin{enumerate}
  \item $(BA)^*=A^*B^*$ if $B$ is bounded.
  \item $A^*B^*\subset (BA)^*$ for any densely unbounded $B$ and if $BA$ is
  densely defined.
  \item Both $AA^*$ and $A^*A$ are self-adjoint whenever $A$ is
  closed.
\end{enumerate}
\end{thm}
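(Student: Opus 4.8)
The plan is to handle (1) and (2) together by unwinding the definition of the adjoint, since (1) is essentially the equality case of (2), and to prove (3) by von Neumann's graph argument. Throughout, $D(\cdot)$ denotes domains and $G(A)=\{(x,Ax):x\in D(A)\}$ the graph of $A$ in $H\oplus H$.

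For (2), I would start from a fixed $y\in D(A^*B^*)$, which by definition means $y\in D(B^*)$ and $B^*y\in D(A^*)$. For an arbitrary $x\in D(BA)$ we have $x\in D(A)$ and $Ax\in D(B)$, so the defining relation for $B^*$ (legitimate because $y\in D(B^*)$ and $Ax\in D(B)$) gives $\langle BAx,y\rangle=\langle Ax,B^*y\rangle$, and then $B^*y\in D(A^*)$ together with $x\in D(A)$ lets me rewrite this as $\langle x,A^*B^*y\rangle$. Since $BA$ is densely defined, the identity $\langle BAx,y\rangle=\langle x,A^*B^*y\rangle$ for all $x\in D(BA)$ forces $y\in D((BA)^*)$ with $(BA)^*y=A^*B^*y$, which is exactly $A^*B^*\subset(BA)^*$.

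For (1), I would first note that boundedness makes $B$ everywhere defined, so $D(BA)=D(A)$ is dense and $B^*$ is bounded and everywhere defined; the inclusion $A^*B^*\subset(BA)^*$ is then a special case of (2). For the reverse inclusion I take $y\in D((BA)^*)$ and combine $\langle BAx,y\rangle=\langle Ax,B^*y\rangle$ (now valid for every $x$ because $B^*$ is everywhere defined) with $\langle BAx,y\rangle=\langle x,(BA)^*y\rangle$; matching these over all $x\in D(A)$ shows $B^*y\in D(A^*)$ and $A^*(B^*y)=(BA)^*y$, i.e. $y\in D(A^*B^*)$. The crucial asymmetry with (2) is precisely that when $B$ is merely densely defined, a general $y\in D((BA)^*)$ need not lie in $D(B^*)$, so the step $\langle BAx,y\rangle=\langle Ax,B^*y\rangle$ is unavailable and only one inclusion survives.

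For (3), I would invoke the graph decomposition. With the unitary $U(u,v)=(-v,u)$ on $H\oplus H$, a direct computation shows $U(G(A))^{\perp}=G(A^*)$. Closedness of $A$ makes $G(A)$, hence $U(G(A))$, closed, so $H\oplus H=U(G(A))\oplus G(A^*)$. Decomposing the vector $(0,h)$ in this orthogonal sum produces, for each $h\in H$, some $x\in D(A^*A)$ with $(I+A^*A)x=h$, so $I+A^*A$ is surjective. Combined with $\langle(I+A^*A)x,x\rangle=\|x\|^2+\|Ax\|^2\geq\|x\|^2$, which yields injectivity and a bounded everywhere-defined inverse, the self-adjointness of that (bounded, everywhere defined, symmetric) inverse gives self-adjointness of $I+A^*A$ and hence of $A^*A$. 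Applying the same argument to the closed densely defined operator $A^*$ (and using $A^{**}=A$) delivers the self-adjointness of $AA^*$. The main obstacle is the surjectivity of $I+A^*A$: this is exactly where closedness of $A$ is indispensable, and the orthogonal graph decomposition is the device that supplies it.
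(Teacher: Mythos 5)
Your proof is correct. The paper offers no proof of this theorem at all --- it is stated as recalled background material from the standard references --- and your argument is the classical one: unwinding the definition of the adjoint for parts (1) and (2), with the correct identification of where dense definedness of $B$ fails to give the reverse inclusion, and von Neumann's graph-decomposition theorem for part (3).
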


\begin{lem}[\cite{WEI}]\label{(AB)*=B*A*} If $A$ and $B$ are densely defined and $A$ is invertible with
inverse $A^{-1}$ in $B(H)$, then $(BA)^* =A^* B^*$.
\end{lem}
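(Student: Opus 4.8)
The plan is to prove the two-sided equality by establishing both inclusions, with the invertibility of $A$ doing all the real work. First I would record two preliminary facts. Since $A^{-1}\in B(H)$, the operator $A$ is closed and it is standard that $A^*$ is again invertible with $(A^*)^{-1}=(A^{-1})^*\in B(H)$; this bounded two-sided inverse of $A^*$ is the mechanism that makes the argument go through, so I would state it at the outset. I would also check that $BA$ is densely defined, so that $(BA)^*$ is even meaningful: one has $D(BA)=A^{-1}(D(B))$, and since $A^{-1}$ is bounded (hence continuous) with dense range $D(A)$, the density of $D(B)$ forces $\overline{A^{-1}(D(B))}\supseteq \overline{A^{-1}(\overline{D(B)})}=\overline{D(A)}=H$.

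With $BA$ densely defined, the inclusion $A^*B^*\subset(BA)^*$ is immediate from Theorem~\ref{adjoints and closedness AB basic}(2) applied to the densely defined $B$. So the only genuine content is the reverse inclusion $(BA)^*\subset A^*B^*$, which the general product rule never supplies on its own. Here I would exploit that $A$ can be cancelled on the right. A direct check shows $B=(BA)A^{-1}$ as operators: for $v\in D(B)$ we have $A^{-1}v\in D(BA)$ with $BA(A^{-1}v)=Bv$, and the map $v\mapsto A^{-1}v$ is a bijection from $D(B)$ onto $D(BA)$.

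Now I would feed this bijection into the definition of the adjoint. If $x\in D((BA)^*)$ with $w=(BA)^*x$, then $\langle BAu,x\rangle=\langle u,w\rangle$ for every $u\in D(BA)$; substituting $u=A^{-1}v$ with $v$ ranging over all of $D(B)$ gives
\[
\langle Bv,x\rangle=\langle A^{-1}v,w\rangle=\langle v,(A^*)^{-1}w\rangle\qquad\text{for all }v\in D(B).
\]
This says exactly that $x\in D(B^*)$ with $B^*x=(A^*)^{-1}w$. Because the range of $(A^*)^{-1}$ is $D(A^*)$, it follows automatically that $B^*x\in D(A^*)$ and $A^*B^*x=A^*(A^*)^{-1}w=w=(BA)^*x$; hence $x\in D(A^*B^*)$ and the two operators agree on it.

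The step I expect to be the main obstacle is precisely this reverse inclusion together with its domain bookkeeping — it is where one must resist simply invoking the general rule and instead use that $A^{-1}$, and therefore $(A^*)^{-1}$, is bounded and everywhere defined. I would also note that each equality in the displayed line is reversible (the substitution $u\leftrightarrow v$ is a bijection), so in fact the single chain of equivalences already yields the equality of domains and the identity of values simultaneously, making the appeal to Theorem~\ref{adjoints and closedness AB basic}(2) optional rather than essential.
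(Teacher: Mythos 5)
The paper states this lemma as a quoted result from Weidmann's book \cite{WEI} and supplies no proof of its own, so there is no internal argument to compare against; your proof is correct and complete. It is in substance the standard argument — the easy inclusion $A^*B^*\subset (BA)^*$ from the general product rule, plus the reverse inclusion obtained by writing $B=(BA)A^{-1}$ and exploiting that $(A^*)^{-1}=(A^{-1})^*$ is bounded and everywhere defined — and the domain bookkeeping (density of $D(BA)$, the bijection $v\mapsto A^{-1}v$ from $D(B)$ onto $D(BA)$, and the observation that $\mathrm{ran}\,(A^*)^{-1}=D(A^*)$) is all handled correctly.
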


\begin{pro}[\cite{DevNussbaum-von-Neumann}]\label{Devinatz-Nussbaum-von Neumann: T=T1T2}
Let $A$, $B$ and $C$ be unbounded self-adjoint operators. Then
\[A\subseteq BC \Longrightarrow A=BC.\]
\end{pro}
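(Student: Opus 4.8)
The plan is to establish the reverse inclusion $BC \subseteq A$, since together with the hypothesis $A \subseteq BC$ this forces $A = BC$. As $A \subseteq BC$ already means $D(A) \subseteq D(BC)$ with the two operators agreeing on $D(A)$, it suffices to prove $D(BC) \subseteq D(A)$. The device I would use to control the larger domain is the bounded, everywhere-defined resolvent $(A+i)^{-1}$, available because $A$ is self-adjoint. Fix $x \in D(BC)$, put $w = BCx$, and set $u = (A+i)^{-1}(w+ix) \in D(A)$. Because $A \subseteq BC$ we have $Au = BCu$, so from $(A+i)u = w + ix = BCx + ix$ I obtain $BC(u-x) = -i(u-x)$, where $u-x \in D(BC)$ since $D(A) \subseteq D(BC)$. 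Thus the whole problem reduces to showing that $-i$ is not an eigenvalue of $BC$: if $v := u-x$ satisfies $BCv = -iv$, then $v = 0$, whence $u = x \in D(A)$ and $Ax = Au = BCx = w$.

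To kill this eigenvalue I would first record that $CB \subseteq A$. Indeed, taking adjoints in $A \subseteq BC$ reverses the inclusion and gives $(BC)^* \subseteq A^* = A$, while Theorem \ref{adjoints and closedness AB basic}(2) yields $CB = C^*B^* \subseteq (BC)^*$; combining these gives $CB \subseteq A$. Now comes the key step, a \emph{flip} from $BC$ to $CB$. Given $v \in D(BC)$ with $BCv = -iv$, set $s = Cv$. Then $s \in D(B)$ with $Bs = BCv = -iv$, and since $v \in D(C)$ we also have $Bs = -iv \in D(C)$; hence $s \in D(CB)$ and $CBs = C(-iv) = -i\,Cv = -is$. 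Because $CB \subseteq A$, this says $s \in D(A)$ with $As = -is$. A self-adjoint operator has no non-real eigenvalues, so $s = 0$, and then $Bs = -iv$ forces $v = 0$. This completes the reduction and hence the argument.

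The main obstacle, and the point where a naive computation fails, is precisely a domain mismatch: for $x \in D(BC)$ one has $x \in D(C)$ but in general $x \notin D(B)$, so one cannot simply move $B$ across an inner product to verify $x \in D(A^*) = D(A)$. The resolvent substitution sidesteps this by recasting the question as an eigenvalue statement, and the genuinely clever move is the flip $v \mapsto s = Cv$, which converts an eigenvector of $BC$ into an eigenvector of $CB$; since $CB$ (unlike $BC$) is trapped inside the self-adjoint operator $A$, its non-real eigenvalues are automatically excluded. I expect the steps most in need of care to be the two membership facts $s \in D(CB)$ and $CB \subseteq A$, the latter resting essentially on Theorem \ref{adjoints and closedness AB basic}(2).
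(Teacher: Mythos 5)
Your argument is correct, and a point of order first: the paper offers no proof of this proposition at all --- it is imported verbatim from \cite{DevNussbaum-von-Neumann} as a known tool --- so there is nothing internal to compare against; what you have written is essentially the classical resolvent argument of von Neumann for this permutability theorem. Checking the details: the reduction is sound, since $u=(A+i)^{-1}(BCx+ix)\in D(A)\subseteq D(BC)$ and $Au=BCu$ give $BC(u-x)=-i(u-x)$ with $u-x\in D(BC)$; the inclusion $CB\subseteq A$ follows exactly as you say from $(BC)^*\subseteq A^*=A$ (valid because $D(BC)\supseteq D(A)$ is dense, so the adjoint exists and reverses inclusions) together with part (2) of Theorem \ref{adjoints and closedness AB basic} applied with $B=B^*$, $C=C^*$; and the flip $v\mapsto s=Cv$ correctly lands $s$ in $D(CB)$ because $Bs=-iv$ and $v\in D(C)$, whence $As=CBs=-is$ forces $s=0$ by self-adjointness of $A$ and then $v=0$ since $Bs=-iv$. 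The one hypothesis you use repeatedly and should perhaps flag explicitly is the self-adjointness of $B$ and $C$ (not merely of $A$): it is what turns $C^*B^*\subseteq(BC)^*$ into $CB\subseteq(BC)^*$, and without it the proposition is false. As a self-contained replacement for the citation, your proof would be a genuine addition to the paper.
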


\begin{thm}\label{Fuglede-Putnam}[Fuglede-Putnam theorem, for a proof see e.g. \cite{CON}]
If $A$ is a bounded operator and if $M$ and $N$ are normal
operators, then
\[AN\subseteq MA\Longrightarrow AN^*\subseteq M^*A\]
(if $N$ and $M$ are bounded, then we replace "$\subseteq$" by "=").
\end{thm}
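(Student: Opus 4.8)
The plan is to prove the bounded case by Rosenblum's analytic argument and then lift it to the unbounded statement through the spectral theorem. For the bounded case, assume $A,M,N\in B(H)$ with $M,N$ normal and $AN=MA$. An induction gives $AN^k=M^kA$ for all $k$, so summing the exponential series yields $Ae^{\bar z N}=e^{\bar z M}A$, i.e. $A=e^{-\bar z M}Ae^{\bar z N}$, for every $z\in\C$. I would then consider the entire $B(H)$-valued map
\[ F(z)=e^{zM^*}Ae^{-zN^*}=\left(e^{zM^*}e^{-\bar z M}\right)A\left(e^{\bar z N}e^{-zN^*}\right). \]
The decisive point is normality: since $M$ commutes with $M^*$ and $N$ with $N^*$, the exponents add, and $zM^*-\bar z M$ and $\bar z N-zN^*$ are skew-adjoint, so the two outer factors are unitary. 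Hence $\|F(z)\|\le\|A\|$ on all of $\C$, and Liouville's theorem forces $F$ to be constant. Differentiating $F(z)\equiv F(0)=A$ at $z=0$ gives $M^*A-AN^*=0$, that is $AN^*=M^*A$.

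For the unbounded case, $M,N$ are (possibly unbounded) normal and $A$ is bounded with $AN\subseteq MA$. Writing the spectral resolutions $N=\int_{\C}\lambda\,dF_N(\lambda)$ and $M=\int_{\C}\mu\,dF_M(\mu)$, the goal is to show that $A$ intertwines the two \emph{bounded} functional calculi, $A\,\varphi(N)=\varphi(M)\,A$ for every bounded Borel function $\varphi$; taking $\varphi_n(\lambda)=\bar\lambda\,\mathbf{1}_{\{|\lambda|\le n\}}$ and letting $n\to\infty$ then returns $AN^*\subseteq M^*A$ on the correct domain, while the parenthetical equality for bounded $M,N$ is already covered above. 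To obtain this intertwining I would truncate: set $P_n=F_N(\{|\lambda|\le n\})$, $Q_n=F_M(\{|\mu|\le n\})$ and $N_n=NP_n$, $M_n=MQ_n$, which are bounded normal operators, check that $A$ respects the truncations so that $AN_n$ and $M_nA$ agree, apply the bounded case to the pair $N_n,M_n$, and finally pass to the strong operator limit as $n\to\infty$.

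The main obstacle is precisely this unbounded lift: the exponential manipulations of the bounded step are no longer legitimate because $e^{\bar z N}$ need not be bounded, and the hypothesis $AN\subseteq MA$ is only an inclusion carrying domain constraints (for $x\in D(N)$ one has $Ax\in D(M)$ and $ANx=MAx$). The delicate tasks are to show that $A$ maps the spectral subspaces of $N$ consistently into those of $M$, so that the truncations are compatible, and to justify that the strong limits preserve the intertwining on $D(N^*)$. It is exactly this domain bookkeeping that produces an inclusion rather than an equality in the conclusion: the bounded operator $A$ sends $D(N^*)$ into $D(M^*)$ but need not be surjective onto it.
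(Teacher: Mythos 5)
The paper offers no proof of this theorem: it is quoted as a classical result with a pointer to Conway, so there is nothing internal to compare your argument against; it has to stand on its own. Your bounded case does stand on its own: it is Rosenblum's classical argument, and it is complete and correct. From $AN=MA$ you get $AN^k=M^kA$, hence $e^{-\bar zM}Ae^{\bar zN}=A$; normality lets you combine exponents so that $F(z)=e^{zM^*}Ae^{-zN^*}=e^{zM^*-\bar zM}\,A\,e^{\bar zN-zN^*}$ with both outer factors unitary (the exponents are skew-adjoint), Liouville gives $F$ constant, and $F'(0)=M^*A-AN^*=0$. This is exactly the standard proof one finds in Conway.

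The unbounded half, however, is a strategy outline with its central step left open, and you say so yourself. The claim that ``$A$ respects the truncations,'' i.e.\ that $A$ maps $\ran F_N(\{|\lambda|\le n\})$ into $\ran F_M(\{|\mu|\le n\})$ so that $AN_n$ and $M_nA$ agree, is not a routine verification to be checked after the fact --- it is the entire content of the unbounded extension, and nothing in your sketch derives it from $AN\subseteq MA$. It can be done: for $x\in\ran F_N(\{|\lambda|\le n\})$ one iterates the inclusion to get $AN^kx=M^kAx$, hence $\|M^kAx\|\le\|A\|\,n^k\|x\|$ for all $k$, and a vector on which $M^k$ grows at most like $n^k$ must lie in $\ran F_M(\{|\mu|\le n\})$ (any spectral mass at $|\mu|\ge n+\varepsilon$ would force growth like $(n+\varepsilon)^k$). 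With that in hand the truncated bounded case applies and the limit passage goes through (the uniform bound $\|\varphi_n(M)Ax\|=\|A\varphi_n(N)x\|$ shows $Ax\in D(M^*)$ for $x\in D(N^*)$, which is where the inclusion rather than equality comes from, as you correctly anticipate). But as written, the proposal proves only the bounded statement; the unbounded case is asserted modulo precisely the lemma that makes it true.
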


\begin{thm}\label{AB normal implying AB self-adjoint}[\cite{Mortad-PAMS2003}]
Let $A$ and $B$ be two self-adjoint operators such that $AB$ is
normal. Then
\begin{enumerate}
  \item If $A$ and $B$ are both bounded, and if further $\sigma(A)\cap
\sigma(-A)\subseteq\{0\}$ or $\sigma(B)\cap
\sigma(-B)\subseteq\{0\}$, then $AB$ is self-adjoint.
  \item If only $B$ is bounded, and if $\sigma(B)\cap
\sigma(-B)\subseteq\{0\}$, then $AB$ is self-adjoint.
\end{enumerate}
\end{thm}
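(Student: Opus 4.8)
The plan is to reduce both statements to the commutation $AB=BA$. When $A$ and $B$ are bounded and self-adjoint one has $(AB)^*=BA$, so $AB$ is self-adjoint exactly when $AB=BA$; in the unbounded case I will instead produce self-adjointness directly at the end, using the same circle of ideas. First I would extract the algebraic content of normality through the Fuglede--Putnam theorem, exploiting the two elementary identities $A(BA)=(AB)A$ and $B(AB)=(BA)B$. Since $AB$ is normal and the adjoint of a normal operator is normal, both $AB$ and $BA=(AB)^*$ are normal. Applying Theorem \ref{Fuglede-Putnam} with intertwiner $T=A$ and the normal pair $(AB,BA)$ converts $A(BA)=(AB)A$ into $A(BA)^*=(AB)^*A$, that is $A^2B=BA^2$; likewise $T=B$ turns $B(AB)=(BA)B$ into $B^2A=AB^2$. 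Thus normality forces $A^2$ to commute with $B$ and $B^2$ to commute with $A$.

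Next I would turn the spectral hypothesis into a functional-calculus statement. If $\sigma(A)\cap\sigma(-A)\subseteq\{0\}$, then $\lambda\mapsto\lambda^2$ is injective on $\sigma(A)$, so there is a Borel function $f$ with $f(\lambda^2)=\lambda$ on $\sigma(A)$ and hence $A=f(A^2)$; symmetrically the hypothesis on $B$ gives $B=g(B^2)$. In case (1), when the condition is imposed on $A$, the relation $A^2B=BA^2$ shows that $B$ commutes with the bounded self-adjoint operator $A^2$, hence with every bounded Borel function of $A^2$, in particular with $A=f(A^2)$; this yields $AB=BA$ and therefore self-adjointness. If instead the condition is on $B$, then $B^2A=AB^2$ together with $B=g(B^2)$ gives the same conclusion. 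This settles part (1).

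For part (2), with $A$ unbounded, I would run the same Fuglede--Putnam argument through the unbounded version \ref{Fuglede-Putnam}, paying careful attention to domains. Using Theorem \ref{adjoints and closedness AB basic}(1) one identifies $(BA)^*=AB$, whence $(AB)^*=\overline{BA}$, and both $AB$ and $(AB)^*$ are normal. One then verifies on $D(AB)$ the intertwining inclusion $B(AB)\subseteq(AB)^*B$, which is exactly the hypothesis needed to apply \ref{Fuglede-Putnam} with the bounded intertwiner $B$ and the normal pair $(AB,(AB)^*)$. The resulting inclusion $B(AB)^*\subseteq(AB)B$, read on $D(A)$, yields $B^2A\subseteq AB^2$. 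From here I would pass from $B^2$ to $B$ and upgrade an inclusion to an equality: $B^2A\subseteq AB^2$ implies that $B^2$ commutes with the spectral measure of $A$, hence so does $B=g(B^2)$, giving $BA\subseteq AB$; since $AB$ is closed this forces $(AB)^*=\overline{BA}\subseteq AB$, and the equality $D(AB)=D((AB)^*)$ coming from normality turns this into $AB=(AB)^*$.

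The main obstacle I anticipate lies entirely in case (2): the domain bookkeeping needed to establish the intertwining inclusion $B(AB)\subseteq(AB)^*B$ and the correct identification of $(AB)^*$ via Theorem \ref{adjoints and closedness AB basic}, together with the final passage from $(AB)^*\subseteq AB$ to genuine equality. Here the normality of $AB$ (the coincidence of the domains of $AB$ and $(AB)^*$) is essential, and as an alternative one could invoke Proposition \ref{Devinatz-Nussbaum-von Neumann: T=T1T2} to close the argument. By contrast the bounded case (1) is routine once the two Fuglede--Putnam identities and the ``function of the square'' observation are in place.
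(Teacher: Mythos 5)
Note first that the paper does not prove this theorem: it is recalled verbatim from \cite{Mortad-PAMS2003} as background, so there is no in-paper proof to compare against. Judged on its own terms, your reconstruction is correct and follows the same circle of ideas as the cited source: use Fuglede--Putnam on the trivial intertwinings $A(BA)=(AB)A$ and $B(AB)\subseteq (AB)^*B$ to extract $A^2B=BA^2$, resp.\ $B^2A\subseteq AB^2$; then observe that the hypothesis $\sigma(B)\cap\sigma(-B)\subseteq\{0\}$ makes $t\mapsto t^2$ injective on the compact set $\sigma(B)$, so that $B=g(B^2)$ for a (even continuous) function $g$ on $\sigma(B^2)$, whence $B$ inherits the commutation from $B^2$; finally $BA\subseteq AB$ gives $(AB)^*=\overline{BA}\subseteq AB$, and equality of domains from normality closes the gap. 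All the delicate steps in case (2) check out: $(BA)^*=A^*B^*=AB$ by Theorem \ref{adjoints and closedness AB basic}(1), $BA$ is closable because its adjoint is densely defined, and $B^2A\subseteq AB^2$ does imply that $B^2$ (hence every bounded Borel function of it, by the commutant argument) commutes with the spectral measure of $A$. Two small points you should make explicit if you write this up: the injectivity of $t\mapsto t^2$ on $\sigma(B)$ uses that $\lambda,-\lambda\in\sigma(B)$ forces $\lambda=0$, and the passage from ``commutes with $B^2$'' to ``commutes with $g(B^2)$'' requires the double-commutant (or spectral-measure) argument rather than a purely algebraic one. Neither is a gap, just a place where detail is owed.
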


\begin{thm}\label{theorem BBP}[\cite{BBP}]
Let $A$, $B$ be bounded operators such that $AB\neq 0$ and
$AB=\lambda BA$, $\lambda\in\C^*$.  Then
\begin{enumerate}
  \item if $A$ or $B$ is self-adjoint, then $\lambda\in\R$;
  \item if both $A$ and $B$ are self-adjoint, then
  $\lambda\in\{-1,1\}$; and
  \item if $A$ and $B$ are self-adjoint and one of them is positive,
  then $\lambda=1$.
\end{enumerate}
\end{thm}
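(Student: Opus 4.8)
The plan is to prove the three assertions in the order (2), (3), (1), because each one feeds the next, and to use throughout the single device of passing from the given relation to a relation between \emph{two self-adjoint operators, one of them positive}. Two standard facts about bounded operators on $H$ will be used without comment: $\sigma(T^*)=\overline{\sigma(T)}$, and $\sigma(AB)\setminus\{0\}=\sigma(BA)\setminus\{0\}$. Note first that $AB\neq 0$ forces $BA\neq 0$ (otherwise $AB=\lambda BA=0$).

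For (2), assume $A=A^*$ and $B=B^*$. Taking adjoints in $AB=\lambda BA$ gives $BA=\bar\lambda AB$, while the relation itself gives $BA=\lambda^{-1}AB$; since $AB\neq 0$ we get $\bar\lambda=\lambda^{-1}$, i.e. $|\lambda|=1$, so write $\lambda=e^{i\theta}$. Then $(AB)^*=BA=\bar\lambda AB$, and I claim that $S:=e^{-i\theta/2}AB$ is self-adjoint: indeed $S^*=e^{i\theta/2}(AB)^*=e^{i\theta/2}\bar\lambda AB=e^{-i\theta/2}AB=S$. Consequently $\sigma(AB)=e^{i\theta/2}\sigma(S)\subseteq e^{i\theta/2}\R$ and $\sigma(BA)=\overline{\sigma(AB)}\subseteq e^{-i\theta/2}\R$. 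Since $AB\neq 0$ we have $S\neq 0$, so $\|S\|=\sup|\sigma(S)|>0$ and there is a nonzero real $s\in\sigma(S)$. Then $e^{i\theta/2}s\in\sigma(AB)\setminus\{0\}=\sigma(BA)\setminus\{0\}\subseteq e^{-i\theta/2}\R$, which forces $e^{i\theta}\in\R$, whence $\lambda=\pm 1$. This step — upgrading $|\lambda|=1$ to $\lambda\in\{-1,1\}$ — is the main obstacle, precisely because in infinite dimensions $\sigma(AB)=\sigma(BA)$ may fail at $0$ and $AB$ could a priori be quasinilpotent; the self-adjoint rescaling $S$ together with the off-zero spectral identity is what removes this difficulty.

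For (3), assume in addition (say) $B\geq 0$. By part (2) we already have $\lambda\in\{-1,1\}$, so it suffices to exclude $\lambda=-1$. If $AB=-BA$, then $AB^2=(AB)B=-BAB=-B(AB)=B^2A$, so $A$ commutes with $B^2$; since $B\geq 0$ is the unique positive square root of $B^2$, the functional calculus gives $B=(B^2)^{1/2}$ as a limit of polynomials in $B^2$, hence $A$ commutes with $B$ as well. Then $AB=BA=-AB$ yields $AB=0$, contradicting $AB\neq 0$. Therefore $\lambda=1$. (The case $A\geq 0$ is symmetric, using $A^2B=BA^2$.)

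For (1), assume $A=A^*$ and $B$ arbitrary. Taking adjoints in $AB=\lambda BA$ gives $B^*A=\bar\lambda AB^*$, i.e. $AB^*=\bar\lambda^{-1}B^*A$. Multiplying $AB=\lambda BA$ on the right by $B^*$ and substituting yields $A(BB^*)=(AB)B^*=\lambda B(AB^*)=(\lambda/\bar\lambda)(BB^*)A$. Here $A$ and $BB^*$ are self-adjoint and $BB^*\geq 0$. Moreover $A(BB^*)\neq 0$: if $ABB^*=0$ then $(AB)(AB)^*=ABB^*A=0$, forcing $AB=0$, a contradiction. Applying part (3) to the pair $(A,\,BB^*)$ gives $\lambda/\bar\lambda=1$, i.e. $\lambda\in\R$. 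If instead $B=B^*$, the symmetric computation produces $(A^*A)B=(\lambda/\bar\lambda)B(A^*A)$ with $A^*A\geq 0$ and $(A^*A)B\neq 0$, and part (3) again gives $\lambda\in\R$. Thus in every case $\lambda$ is real, completing the proof.
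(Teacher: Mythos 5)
Your proof is correct, but note that the paper itself offers no proof of this statement: it is quoted verbatim from \cite{BBP} as background, so the only comparison available is with the paper's later re-derivations of parts (2) and (3) by a quite different route. The paper's method is to first show that $AB$ is \emph{normal} (via the Fuglede--Putnam theorem applied to $\lambda A$, $\lambda B$), then invoke the result of \cite{Mortad-PAMS2003} that a normal product of self-adjoint operators is self-adjoint under a spectral asymmetry or positivity hypothesis, and finally bootstrap with $A^2B=\lambda^2BA^2$ to get $\lambda\in\{-1,1\}$. You instead argue directly and more elementarily: for (2) you rescale $AB$ to a self-adjoint operator $S=e^{-i\theta/2}AB$ and play $\sigma(AB)\subseteq e^{i\theta/2}\R$ against $\sigma(BA)=\overline{\sigma(AB)}$ through the off-zero spectral identity $\sigma(AB)\setminus\{0\}=\sigma(BA)\setminus\{0\}$ (correctly using that a nonzero self-adjoint operator has nonzero spectrum); for (3) you use the positive-square-root functional calculus to upgrade $[A,B^2]=0$ to $[A,B]=0$; and for (1) you reduce to (3) via $A(BB^*)=(\lambda/\bar\lambda)(BB^*)A$, with the necessary check that $ABB^*\neq 0$. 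All steps are sound, there is no circularity in the order $(2)\to(3)\to(1)$, and your argument is closer in spirit to the original Brooke--Busch--Pearson and Yang--Du proofs than to the normality-based machinery this paper develops. What your route buys is self-containedness and avoidance of the spectral hypotheses ($\sigma(B)\cap\sigma(-B)\subseteq\{0\}$) that the paper's method needs as an intermediate crutch; what the paper's route buys is a template that extends to unbounded operators, which is its real point.
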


\begin{thm}\label{theorem Yang-Du}[\cite{YANGDU}]
Let $A$, $B$ be bounded operators such that $AB=\lambda BA\neq 0$,
$\lambda\in\C^*$.  Then
\begin{enumerate}
  \item if $A$ or $B$ is self-adjoint, then $\lambda\in\R$;
  \item if either $A$ or $B$ is self-adjoint and the other is normal, then
  $\lambda\in\{-1,1\}$; and
  \item if $A$ and $B$ are both normal, then $|\lambda|=1$.
\end{enumerate}
\end{thm}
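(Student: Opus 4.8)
The engine of the whole argument will be the Fuglede--Putnam theorem (Theorem \ref{Fuglede-Putnam}); the point is that the relation $AB=\lambda BA$ can be read as an intertwining between normal operators in two different ways. For part (1), assume first that $A$ is self-adjoint. I would rewrite the hypothesis as $B(\lambda A)=AB$, so that $B$ intertwines the normal operators $\lambda A$ and $A$. Fuglede--Putnam then yields $B(\lambda A)^*=A^*B$, that is, $\overline{\lambda}\,BA^*=A^*B$. Since $A^*=A$ this becomes $\overline{\lambda}\,BA=AB=\lambda BA$, whence $(\lambda-\overline{\lambda})BA=0$. As $AB=\lambda BA\neq0$ and $\lambda\neq0$ force $BA\neq0$, I conclude $\lambda=\overline{\lambda}$, i.e. $\lambda\in\R$. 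If instead $B$ is self-adjoint, I would run the symmetric argument, writing $AB=(\lambda B)A$ and using $A$ as the intertwiner between the normals $B$ and $\lambda B$.

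For part (3), with both $A$ and $B$ normal, the plan is to apply Fuglede--Putnam twice and then compare. Using $B$ as intertwiner in $B(\lambda A)=AB$ gives $\overline{\lambda}\,BA^*=A^*B$, while using $A$ as intertwiner in $AB=(\lambda B)A$ gives $AB^*=\overline{\lambda}\,B^*A$; taking the adjoint of the latter produces $BA^*=\lambda A^*B$. Combining the two resulting identities for $A^*B$, namely $A^*B=\overline{\lambda}\,BA^*$ and $A^*B=\lambda^{-1}BA^*$, I obtain $(\overline{\lambda}-\lambda^{-1})BA^*=0$, which forces $|\lambda|^2=1$ \emph{provided} $BA^*\neq0$.

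The one genuine obstacle is exactly this nonvanishing, since a priori one only knows $AB\neq0$. Here normality is indispensable: for a normal operator $\ker A=\ker A^*$, so $A^*B=0$ would give $\ran B\subseteq\ker A^*=\ker A$ and hence $AB=0$, contrary to hypothesis. Thus $A^*B\neq0$, and then the identity $A^*B=\overline{\lambda}\,BA^*$ (with $\overline{\lambda}\neq0$) guarantees $BA^*\neq0$ as well, completing part (3). Finally, part (2) requires no new work: a self-adjoint operator is normal, so under its hypotheses both $A$ and $B$ are normal and part (3) yields $|\lambda|=1$, while part (1) yields $\lambda\in\R$; together these force $\lambda\in\{-1,1\}$.
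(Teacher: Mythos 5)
Your argument is correct in all three parts. The paper itself does not prove this statement: it is recalled verbatim from \cite{YANGDU} as background, so there is no in-paper proof to compare against. Your route --- reading $AB=\lambda BA$ as the intertwining relations $B(\lambda A)=AB$ and $AB=(\lambda B)A$ and applying the Fuglede--Putnam theorem (Theorem \ref{Fuglede-Putnam}) in each direction --- is exactly the technique the authors deploy for their own results (compare the proof of Theorem \ref{commu lambda all bounded implies normality iff lambda anything}, which derives $AB^*=\overline{\lambda}B^*A$ and $A^*B=\overline{\lambda}BA^*$ in the same way). Two details of yours deserve explicit praise because they are the points most often fumbled: first, the observation that $AB\neq 0$ and $\lambda\neq 0$ force $BA\neq 0$, which is what lets you cancel in part (1); second, the use of $\ker A=\ker A^*$ for normal $A$ to rule out $A^*B=0$ (and hence $BA^*=0$) in part (3) --- without that step the identity $(\overline{\lambda}-\lambda^{-1})BA^*=0$ would be vacuous. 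The deduction of part (2) from parts (1) and (3) is the standard one. No gaps.
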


\begin{thm}\label{mortad ZAA unbounded}[\cite{Mortad-ZAA-2010}]
Let $A$ be an unbounded operator and let $B$ be a bounded one.
Assume that $BA\subset \lambda AB\neq 0$ where $\lambda\in\C$. Then
\begin{enumerate}
  \item $\lambda$ is real if $A$ is self-adjoint.
  \item $\lambda=1$ if $0\not\in W(B)$ (the numerical range of $B$) and if $A$ is normal; hence
  $\lambda=1$ if $B$ is strictly positive and $A$ is normal.
  \item $\lambda\in\{-1,1\}$ if $A$ is normal and $B$ is self-adjoint.
\end{enumerate}
\end{thm}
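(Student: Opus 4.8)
The plan is to make the Fuglede--Putnam theorem (Theorem \ref{Fuglede-Putnam}) the single engine behind all three parts. In each case $A$ is normal (self-adjointness being a special case), so rewriting the hypothesis as $BA\subseteq(\lambda A)B$ with $B$ bounded and $A$, $\lambda A$ normal, Theorem \ref{Fuglede-Putnam} yields the companion inclusion $BA^*\subseteq\bar\lambda A^*B$. I will also use repeatedly the following nondegeneracy remark: if $ABx=0$ for every $x\in D(A)$, then $AB=0$. Indeed $B$ maps $D(A)$ into $D(A)$ (read off from $BA\subseteq\lambda AB$), so $D(A)\subseteq D(AB)$; given $v\in D(AB)$ choose $v_n\in D(A)$ with $v_n\to v$, whence $Bv_n\to Bv$ and $ABv_n=0$, and the closedness of $A$ forces $ABv=0$. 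Thus the standing hypothesis $AB\neq0$ guarantees some $x\in D(A)$ with $ABx\neq0$.

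For \textbf{(1)} self-adjointness gives $A^*=A$, so the companion inclusion reads $BA\subseteq\bar\lambda AB$. Comparing this with $BA\subseteq\lambda AB$ on $D(A)$ gives $(\lambda-\bar\lambda)ABx=0$ for all $x\in D(A)$; by the remark there is an $x$ with $ABx\neq0$, hence $\lambda=\bar\lambda\in\R$. For \textbf{(3)}, with $B=B^*$ I combine two inclusions: the companion $BA^*\subseteq\bar\lambda A^*B$, and the adjoint of the hypothesis, which using $(BA)^*=A^*B$ (Theorem \ref{adjoints and closedness AB basic}(1)) and $BA^*=B^*A^*\subseteq(AB)^*$ (Theorem \ref{adjoints and closedness AB basic}(2)) reads $\bar\lambda BA^*\subseteq A^*B$. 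On $D(A)=D(A^*)$ these say $BA^*x=\bar\lambda A^*Bx$ and $A^*Bx=\bar\lambda BA^*x$; substituting one into the other gives $(1-\bar\lambda^2)BA^*x=0$. Were $BA^*=0$, the second relation would force $A^*Bx=0$, and normality (so that $\|ABx\|=\|A^*Bx\|$) together with the remark would give $AB=0$, a contradiction; hence $\bar\lambda^2=1$ and $\lambda\in\{-1,1\}$.

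Part \textbf{(2)} is where the real work lies. The two inclusions $BA\subseteq\lambda AB$ and $BA^*\subseteq\bar\lambda A^*B$ say precisely that $B$ intertwines the normal operators $A$ and $\lambda A$ together with their adjoints. The plan is to promote this to an intertwining of spectral measures: if $E$ denotes the spectral measure of $A$ on $\C$, then $BE(\Delta)=E(\lambda^{-1}\Delta)B$ for every Borel set $\Delta\subseteq\C$ (the spectral measure of $\lambda A$ at $\Delta$ being $E(\lambda^{-1}\Delta)$). Granting this, take any $\Delta$ with $\lambda^{-1}\Delta\cap\Delta=\varnothing$; then for $y\in\ran E(\Delta)$ one computes $\langle By,y\rangle=\langle E(\lambda^{-1}\Delta)By,E(\Delta)y\rangle=\langle By,E(\lambda^{-1}\Delta\cap\Delta)y\rangle=0$. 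Since $0\notin W(B)$, this forces $E(\Delta)=0$. If $\lambda\neq1$ one covers $\C\setminus\{0\}$ by countably many such sets $\Delta$ --- annuli when $|\lambda|\neq1$, and thin sectors based at the origin when $\lambda=e^{i\theta}$ with $\theta\neq0$ --- so that $E(\C\setminus\{0\})=0$, i.e. $\sigma(A)\subseteq\{0\}$ and $A=0$, contradicting $AB\neq0$. Hence $\lambda=1$; strict positivity of $B$ is simply the special case $0\notin\overline{W(B)}\subseteq(0,\infty)$.

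The main obstacle is the promotion step in part (2): passing from the operator intertwining $BA\subseteq\lambda AB$, $BA^*\subseteq\bar\lambda A^*B$ to the spectral-measure identity $BE(\Delta)=E(\lambda^{-1}\Delta)B$ for the \emph{unbounded} normal $A$. For bounded normal operators this is the standard measurable functional-calculus consequence of Fuglede--Putnam, but in the unbounded setting it must be routed through the bounded transforms of $A$ (for instance the resolvents $(A-z)^{-1}$, on which $BA\subseteq\lambda AB$ already gives the clean identity $B(A-\lambda z)^{-1}=\lambda^{-1}(A-z)^{-1}B$), with some care that the annular and sectorial coverings genuinely exhaust $\sigma(A)\setminus\{0\}$ and that $A=0$ is the only obstruction, which $AB\neq0$ removes. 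Parts (1) and (3), by contrast, are immediate once the Fuglede--Putnam companion inclusion is in hand.
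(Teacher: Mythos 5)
The paper never proves this theorem: it is quoted verbatim from \cite{Mortad-ZAA-2010} as background, and the remark after Corollary \ref{corollary unbounded B asymmetric spectrum} even concedes that the original proof of part (2) contained an error, which the paper repairs only in the special case of a positive \emph{self-adjoint} $B$ (Corollary \ref{corollary unbounded B positive}), not under the bare hypothesis $0\notin W(B)$. So there is no in-paper argument to match yours against; the paper's own route to the neighbouring unbounded results is genuinely different from yours --- it first proves that $AB$ is normal and then feeds that into Theorem \ref{AB normal implying AB self-adjoint} to make $AB$ self-adjoint, whereas you work directly with Fuglede--Putnam companion inclusions. On that footing, your parts (1) and (3) are correct and complete: the application of Theorem \ref{Fuglede-Putnam} with $N=A$, $M=\lambda A$ (normal since $\lambda\neq0$, forced by $\lambda AB\neq 0$) is legitimate; the adjoint inclusion $\bar\lambda BA^*\subseteq A^*B$ in (3) is correctly assembled from Theorem \ref{adjoints and closedness AB basic}; and your nondegeneracy remark --- that $AB\neq0$ already yields some $x\in D(A)$ with $ABx\neq0$, via $B(D(A))\subseteq D(A)$ and closedness of $A$ --- is exactly the point that is usually glossed over, and you prove it correctly.

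Part (2) is where there is a real gap. The entire weight of that part rests on the spectral-measure intertwining $BE(\Delta)=E(\lambda^{-1}\Delta)B$ for the \emph{unbounded} normal $A$, and you assert it rather than prove it, explicitly flagging it as ``the main obstacle.'' It is a true and standard fact: one reduces to the single-operator case by forming the normal operator $\mathrm{diag}(\lambda A, A)$ on $H\oplus H$ and the bounded operator with $B$ in the upper right corner, and then invokes the result that a bounded operator commuting with an unbounded normal operator commutes with every spectral projection of it (this is in \cite{RUD}, and your resolvent identity $(A-z)^{-1}B=\lambda B(A-\lambda z)^{-1}$ is the right starting point for a hands-on derivation). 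But since the subsequent steps --- $E(\Delta)=0$ whenever $\Delta\cap\lambda^{-1}\Delta=\varnothing$ because $0\notin W(B)$, the annular/sectorial covering of $\C\setminus\{0\}$, and $AB\neq0$ excluding $A=0$ --- are all easy, the uncited lemma \emph{is} the proof of (2). Supply it (or a precise citation) and your argument is complete; it would then actually be of independent value, since it proves under the stated hypotheses the very part whose original proof the present paper says was flawed and which the paper itself only recovers under stronger assumptions.
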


In the end, we assume other notions and results on both bounded and
unbounded operators (some general textbooks are \cite{CON},
\cite{RUD} and \cite{WEI}). In particular, the reader should be
aware that invertible operators are taken to have an everywhere
defined bounded inverse, and that if $A$ and $B$ are two densely
defined unbounded operators, then
\[(\lambda A)B=A(\lambda B)=\lambda(AB)\]
whenever $\lambda\in \C^*$.

\section{Main Results}

\subsection{The Bounded Case}
\begin{defn}
Two bounded operators $A$ and $B$ are said to commute up to a factor
$\lambda$ (or $\lambda$-commute) if $AB=\lambda BA$ for some complex
$\lambda$.
\end{defn}

As alluded to in the introduction, we take on the problem of
commutativity up to a factor differently, that is, we first prove
that the product of two bounded normal $\lambda$-commuting operators
is normal. We have

\begin{thm}\label{commu lambda all bounded implies normality iff lambda anything}
Let $A$ and $B$ be two bounded normal operators such that
$AB=\lambda BA\neq 0$ where $\lambda\in\C$. Then $AB$ (and also
$BA$) is normal for any non-zero $\lambda$.
\end{thm}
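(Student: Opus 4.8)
The plan is to show that $AB$ is normal by verifying the defining identity $(AB)(AB)^* = (AB)^*(AB)$, and the natural tool is the Fuglede--Putnam theorem (Theorem \ref{Fuglede-Putnam}), which is exactly suited to extracting adjoint relations from a commutation-up-to-a-factor hypothesis. I would start from $AB = \lambda BA$ and rewrite it in the form to which Fuglede--Putnam applies. Treating $B$ as the intertwining bounded operator, the relation $AB = \lambda BA$ can be read as $B(\lambda A) = AB$, i.e. $B$ intertwines the normal operator $\lambda A$ on the right with the normal operator $A$ on the left. Since $A$ is normal, $\lambda A$ is normal as well, so Fuglede--Putnam yields a companion relation involving the adjoints, namely $B(\lambda A)^* = A^* B$, that is $\overline{\lambda}\, BA^* = A^* B$.

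Next I would assemble the various commutation relations I can produce. From $AB = \lambda BA$ I get one relation; applying Fuglede--Putnam with the roles of $A$ and $B$ interchanged (now using that $B$ is normal and viewing $A$ as the intertwiner) should give a second relation, something like $A B^* = \overline{\lambda}\, B^* A$ or its variant. The key point is that commuting up to a factor is a symmetric enough condition that each of $A$, $A^*$, $B$, $B^*$ commutes with each of the others up to an appropriate power of $\lambda$ or $\overline{\lambda}$. Once I have the four relations linking $A \leftrightarrow B$, $A \leftrightarrow B^*$, $A^* \leftrightarrow B$, and $A^* \leftrightarrow B^*$, the computation of $(AB)(AB)^*$ and $(AB)^*(AB)$ reduces to pushing operators past one another and collecting scalar factors. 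Using $(AB)^* = B^* A^*$ (Theorem \ref{adjoints and closedness AB basic}(1), since everything is bounded), I would expand
\[
(AB)(B^*A^*) = A (BB^*) A^* \quad\text{and}\quad (B^*A^*)(AB) = B^*(A^*A)B,
\]
then use the normality of $A$ and $B$ (so $BB^* = B^*B$ and $A^*A = AA^*$) together with the factor-commutation relations to move everything into a common form. The scalar factors $\lambda$ and $\overline{\lambda}$ accumulated on each side should combine to $|\lambda|^2$-type products that cancel, leaving the two expressions equal.

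The step I expect to be the main obstacle is bookkeeping the scalar factors correctly so that they genuinely cancel. When I commute a normal operator past its partner twice (once for each adjoint pairing), I expect the $\lambda$ from one application and the $\overline{\lambda}$ from the other to pair into $|\lambda|^2$ or, more delicately, into products that only cancel because of how the factors distribute over the two sides $(AB)(AB)^*$ versus $(AB)^*(AB)$. It is conceivable that the cancellation forces a constraint such as $|\lambda| = 1$ at an intermediate stage; however, since Theorem \ref{theorem Yang-Du}(3) already records that two normal $\lambda$-commuting operators satisfy $|\lambda| = 1$ automatically, I may freely invoke $|\lambda| = 1$, which makes $\overline{\lambda} = \lambda^{-1}$ and streamlines every factor cancellation. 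With that simplification in hand, the two products $(AB)(AB)^*$ and $(AB)^*(AB)$ should coincide, giving normality of $AB$; the identical argument with $A$ and $B$ interchanged (or simply noting $BA = \lambda^{-1} AB$ is again a product of the same type) yields normality of $BA$.
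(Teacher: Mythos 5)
Your proposal is correct and follows essentially the same route as the paper: apply Fuglede--Putnam twice to the relations $B(\lambda A)=AB$ and $A B=(\lambda B)A$ to get $A^*B=\overline{\lambda}BA^*$ and $AB^*=\overline{\lambda}B^*A$, then expand $(AB)^*AB$ and $AB(AB)^*$ and push adjoints through. The only refinement worth noting is that your fallback appeal to $|\lambda|=1$ via Theorem \ref{theorem Yang-Du} is unnecessary: carrying out the bookkeeping, both products reduce to $|\lambda|^2B^*BAA^*$ for arbitrary nonzero $\lambda$, exactly as the paper's remark after the theorem points out.
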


\begin{proof}
Since $A$ and $B$ are both normal, so are $\lambda A$ and $\lambda
B$. So by Theorem \ref{Fuglede-Putnam}, we have
\[AB=\lambda BA\Longrightarrow AB^*=\overline{\lambda} B^*A \text{ and } A^*B=\overline{\lambda} BA^*.\]
Then we have on the one hand
\[(AB)^*AB=B^*A^*AB=B^*AA^*B=\overline{\lambda}B^*ABA^*=|\lambda|^2B^*BAA^*.\]
On the other hand we obtain
\[AB(AB)^*=ABB^*A^*=AB^*BA^*=\overline{\lambda}B^*ABA^*=|\lambda|^2B^*BAA^*.\]
Thus $AB$ is normal.
\end{proof}

\begin{rema}
Of course, thanks to Theorem \ref{theorem Yang-Du}, the condition
$|\lambda|=1$ was tacitly assumed in the previous theorem, but we
would have not needed it (cf. Theorem \ref{commu lambda unbounded
one implies normality iff lambda unit circle}).
\end{rema}

\begin{cor}
Let $A$ and $B$ be two bounded self-adjoint operators satisfying
$AB=\lambda BA\neq 0$ where $\lambda\in\C$. If either $\sigma(A)\cap
\sigma(-A)\subseteq\{0\}$ or $\sigma(B)\cap
\sigma(-B)\subseteq\{0\}$, then $\lambda=1$.
\end{cor}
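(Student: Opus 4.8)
The plan is to funnel this statement through the two structural results that immediately precede it: first upgrade the product $AB$ to a normal operator, then to a self-adjoint one, and finally read off $\lambda=1$ from the resulting commutativity. Since the preceding theorem already established normality of $\lambda$-commuting normal products, the corollary should be a short chain of implications rather than a fresh argument.

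First I would note that self-adjoint operators are in particular normal, so $A$ and $B$ satisfy the hypotheses of Theorem \ref{commu lambda all bounded implies normality iff lambda anything}. Because $AB=\lambda BA\neq 0$ with $\lambda\in\C^*$, that theorem yields at once that $AB$ is normal. Next I would invoke Theorem \ref{AB normal implying AB self-adjoint}(1): we now have two bounded self-adjoint operators $A$ and $B$ whose product is normal, and by hypothesis one of the spectral symmetry conditions $\sigma(A)\cap\sigma(-A)\subseteq\{0\}$ or $\sigma(B)\cap\sigma(-B)\subseteq\{0\}$ holds, so $AB$ is in fact self-adjoint.

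Finally, using the self-adjointness of $A$, $B$ and of the product, I would write
\[AB=(AB)^*=B^*A^*=BA,\]
so that $A$ and $B$ actually commute. Substituting $BA=AB$ into the relation $AB=\lambda BA$ gives $AB=\lambda AB$, i.e. $(1-\lambda)AB=0$, and since $AB\neq 0$ this forces $\lambda=1$.

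I do not expect a genuine obstacle here, as the whole proof is a concatenation of already-available implications. The one step to stay alert on is the last: it is essential to use the hypothesis $AB\neq 0$ in order to cancel and conclude $\lambda=1$, rather than stopping at the tautological $\lambda AB=AB$. As an alternative route one could first apply Theorem \ref{theorem BBP}(2) to restrict $\lambda$ to $\{-1,1\}$ and then use self-adjointness of $AB$ to rule out $\lambda=-1$, but the direct deduction of commutativity above is cleaner and makes the role of the spectral hypotheses transparent.
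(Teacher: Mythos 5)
Your proposal is correct and follows essentially the same route as the paper: the paper likewise applies Theorem \ref{commu lambda all bounded implies normality iff lambda anything} to get normality of $AB$, then Theorem \ref{AB normal implying AB self-adjoint} to get self-adjointness, and concludes from $BA=(AB)^*=AB=\lambda BA$ that $\lambda=1$. Your explicit remark that the cancellation step requires $AB\neq 0$ is a sound point of care, but otherwise the two arguments coincide.
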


\begin{proof}
By Theorem \ref{commu lambda all bounded implies normality iff
lambda anything}, $AB$ and $BA$ are normal. By Theorem \ref{AB
normal implying AB self-adjoint}, $AB$ and $BA$ are self-adjoint.
Hence
\[BA=(AB)^*=AB=\lambda BA,\]
yielding $\lambda=1$. The proof is thus complete.
\end{proof}

\begin{cor}\label{corollary all bounded one positive other s.a.}
Let $A$ and $B$ be two bounded self-adjoint operators satisfying
$AB=\lambda BA\neq 0$ where $\lambda\in\C$. Then $\lambda=1$ if one
of the following occurs:
\begin{enumerate}
  \item $A$ is positive;
  \item $-A$ is positive;
  \item $B$ is positive;
  \item $-B$ is positive.
\end{enumerate}
\end{cor}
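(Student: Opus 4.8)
The plan is to reduce all four cases to the corollary immediately preceding this one, whose hypothesis is the spectral condition $\sigma(A)\cap\sigma(-A)\subseteq\{0\}$ (or the analogous condition on $B$). Once I show that each of the four positivity assumptions forces one of these spectral conditions, the equality $\lambda=1$ follows at once from that corollary.

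The key observation is a routine consequence of the spectral theorem. Since $A$ is self-adjoint, $\sigma(A)\subseteq\R$, and by the spectral mapping theorem $\sigma(-A)=\{-\mu:\mu\in\sigma(A)\}$. If $A$ is positive then $\sigma(A)\subseteq[0,\infty)$, hence $\sigma(-A)\subseteq(-\infty,0]$, so that
\[\sigma(A)\cap\sigma(-A)\subseteq[0,\infty)\cap(-\infty,0]=\{0\}.\]
If instead $-A$ is positive then $\sigma(A)\subseteq(-\infty,0]$ and the same computation again yields $\sigma(A)\cap\sigma(-A)\subseteq\{0\}$. The cases where $B$ or $-B$ is positive are handled identically with $B$ in place of $A$, giving $\sigma(B)\cap\sigma(-B)\subseteq\{0\}$. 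In each of the four cases the hypothesis of the preceding corollary is therefore met, and we conclude $\lambda=1$.

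I expect no real obstacle here: the entire content is the spectral inclusion displayed above, and the conclusion is inherited from the preceding corollary (which itself rests on Theorem \ref{commu lambda all bounded implies normality iff lambda anything} and Theorem \ref{AB normal implying AB self-adjoint}). The only point to verify with care is that positivity is strictly stronger than the spectral gap condition, i.e. that confining the spectrum to a half-line forces the intersection into $\{0\}$ rather than merely a set symmetric about the origin; this is immediate from the sign restriction. Alternatively, one could bypass the preceding corollary and argue directly through Theorem \ref{AB normal implying AB self-adjoint}: positivity of, say, $B$ certainly gives $\sigma(B)\cap\sigma(-B)\subseteq\{0\}$, so $AB$ is self-adjoint and then $BA=(AB)^*=AB=\lambda BA$ forces $\lambda=1$. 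Routing through the preceding corollary is, however, the shortest path.
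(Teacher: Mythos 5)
Your proposal is correct and is exactly the argument the paper intends: the paper omits the proof of this corollary entirely, treating it as an immediate consequence of the preceding corollary, and your reduction (positivity of $\pm A$ or $\pm B$ forces $\sigma(A)\cap\sigma(-A)\subseteq\{0\}$ or $\sigma(B)\cap\sigma(-B)\subseteq\{0\}$ via $\sigma(-A)=-\sigma(A)$) is the standard way to fill in that step. No discrepancy with the paper's approach.
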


\begin{cor}
If both $A$ and $B$ are self-adjoint (and bounded), then
\[AB=\lambda BA\Longrightarrow \lambda \in\{-1,1\}.\]
\end{cor}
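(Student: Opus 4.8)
The plan is to deduce the result from Corollary \ref{corollary all bounded one positive other s.a.} by manufacturing a \emph{positive} operator out of $A$, so that the already-established "one factor positive" case applies. Throughout I work in the nontrivial situation $AB\neq 0$; indeed, if $AB=0$ then (as $A,B$ are self-adjoint) also $BA=(AB)^*=0$, so the relation $AB=\lambda BA$ holds vacuously for every $\lambda$ and no conclusion on $\lambda$ is possible.

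First I would iterate the $\lambda$-commutation relation. Substituting $AB=\lambda BA$ twice gives
\[A^2B=A(AB)=\lambda A(BA)=\lambda(AB)A=\lambda^2 (BA)A=\lambda^2 BA^2.\]
Now $A^2=A^*A$ is a bounded positive self-adjoint operator and $B$ is self-adjoint, so the pair $(A^2,B)$ together with the factor $\lambda^2$ satisfies exactly the hypotheses of Corollary \ref{corollary all bounded one positive other s.a.}, \emph{provided} the product $A^2B$ is nonzero.

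Verifying $A^2B\neq 0$ is the only non-formal step, and I expect it to be the main (if modest) obstacle. The key identity is that, for self-adjoint $A$,
\[\|ABx\|^2=\langle A(Bx),A(Bx)\rangle=\langle A^2Bx,Bx\rangle\quad(x\in H).\]
Hence if $A^2B=0$ the right-hand side vanishes for every $x$, forcing $AB=0$, contrary to our standing assumption. Therefore $A^2B\neq 0$.

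With $A^2B=\lambda^2 BA^2\neq 0$, $A^2\geq 0$ and $B=B^*$, Corollary \ref{corollary all bounded one positive other s.a.} yields $\lambda^2=1$, that is $\lambda\in\{-1,1\}$, completing the argument. As a consistency check, taking adjoints in $AB=\lambda BA$ gives $BA=\overline{\lambda}AB$, whence $AB=|\lambda|^2AB$ and so $|\lambda|=1$; the squaring argument upgrades this to the sharper conclusion $\lambda\in\{-1,1\}$ while remaining entirely within the paper's framework.
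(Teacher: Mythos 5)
Your proof is correct and follows essentially the same route as the paper: iterate the relation to get $A^2B=\lambda^2BA^2$, observe that $A^2$ is positive, and invoke Corollary \ref{corollary all bounded one positive other s.a.} to conclude $\lambda^2=1$. Your additional verification that $A^2B\neq 0$ (via $\|ABx\|^2=\langle A^2Bx,Bx\rangle$) is a detail the paper leaves implicit, but it does not change the argument.
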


\begin{proof}
We have
\[A^2B=\lambda ABA=\lambda^2BA^2.\]
Since $A$ is self-adjoint, $A^2$ is positive so that Corollary
\ref{corollary all bounded one positive other s.a.} gives
$\lambda^2=1$ or $\lambda \in\{-1,1\}$.
\end{proof}

\subsection{A digression}

\begin{defn}
Two bounded operators $A$ and $B$ are said to \textbf{commute
unitarily} if $AB=U BA$ for some unitary operator $U$.
\end{defn}

\begin{pro}
Let $A$ and $B$ be two bounded normal operators such that $AB=UBA$
for some unitary operator $U$. If $U$ commutes with $B$, then $UB$
and $AB$ are both normal.
\end{pro}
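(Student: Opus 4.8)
The plan is to mirror the proof of Theorem \ref{commu lambda all bounded implies normality iff lambda anything}, replacing the scalar $\lambda$ (which there satisfies $|\lambda|=1$) by the unitary $U$ and exploiting the hypothesis $UB=BU$ to move $U$ and $U^*$ freely past $B$ and $B^*$. The normality of $AB$ is the substantive part; the normality of $UB$ comes almost for free.

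First I would dispose of $UB$. Since $U$ is unitary it is normal, $B$ is normal by assumption, and $UB=BU$; two bounded commuting normal operators have a normal product, so $UB$ is normal. Concretely, applying the Fuglede--Putnam theorem (Theorem \ref{Fuglede-Putnam}) to $UB=BU$ turns it into $UB^*=B^*U$, whence $U$ commutes with $B^*B=BB^*$, and then $(UB)(UB)^*=UBB^*U^*=UB^*BU^*=B^*B$ and $(UB)^*(UB)=B^*U^*UB=B^*B$ coincide.

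The heart of the matter is the normality of $AB$. I would rewrite the hypothesis as the intertwining relation $AB=(UB)A$, involving the bounded operator $A$ and the two normal operators $B$ and $UB$. The Fuglede--Putnam theorem then yields $AB^*=(UB)^*A=B^*U^*A$. Taking adjoints in $AB=(UB)A$ gives $A^*(UB)^*=B^*A^*$, which is again an intertwining relation, now for $A^*$; a second application of Fuglede--Putnam produces $A^*(UB)=BA^*$, i.e. $A^*UB=BA^*$. These two identities are the operator analogues of the relations $AB^*=\overline{\lambda}B^*A$ and $A^*B=\overline{\lambda}BA^*$ that drive the proof of Theorem \ref{commu lambda all bounded implies normality iff lambda anything}.

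It then remains to compute the two products. Using $B$ normal, the relation $AB^*=B^*U^*A$, the hypothesis $AB=UBA$ and $U^*U=I$, one finds
\[(AB)(AB)^*=ABB^*A^*=AB^*BA^*=B^*U^*ABA^*=B^*U^*UBAA^*=B^*BAA^*.\]
Using $A$ normal, the hypothesis $AB=UBA$ and the relation $A^*UB=BA^*$, one finds
\[(AB)^*(AB)=B^*A^*AB=B^*A^*UBA=B^*BA^*A=B^*BAA^*.\]
Since both expressions equal $B^*BAA^*$, the operator $AB$ is normal, as is $BA$ by the symmetric argument. The main obstacle I anticipate is not any single computation but the bookkeeping: one must cast the hypothesis in exactly the right form for each Fuglede--Putnam application, and then chain the relations $AB=UBA$, $UB=BU$, $U^*U=I$ and the normality of $A$ and $B$ in the correct order so that both products telescope to the common value $B^*BAA^*$.
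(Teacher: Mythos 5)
Your proof is correct and follows exactly the route the paper intends: the authors omit the proof precisely because it is the same Fuglede--Putnam computation as in Theorem \ref{commu lambda all bounded implies normality iff lambda anything}, with $U$ playing the role of $\lambda$ and the hypothesis $UB=BU$ (upgraded to $UB^*=B^*U$ by Fuglede) letting $U$, $U^*$ slide past $B$, $B^*$ so that both products telescope to $B^*BAA^*$. Your ordering is also right --- establishing the normality of $UB$ first is needed to legitimize the Fuglede--Putnam applications with $UB$ as one of the normal operators.
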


We omit the proof as it is very similar to that of Theorem
\ref{commu lambda all bounded implies normality iff lambda anything}
and hence we leave it to the interested reader.

\subsection{The Unbounded Case}

We may split the main result in this subsection into two parts for
the first one of the two is important in its own right. It also
generalizes known results for two normal operators (where at least
one of them is bounded, see e.g.  \cite{Mortad-Gustafson-Bull-SM}
and \cite{Mortad-Demm-math}).

\begin{thm}\label{commu lambda unbounded one implies normality iff lambda unit circle}
Let $A$ and $B$ be two normal operators where $B$ is bounded. Assume
that $BA\subset \lambda AB\neq 0$ where $\lambda\in \C$. Then $AB$
is normal iff $|\lambda|=1$.
\end{thm}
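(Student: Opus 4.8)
The plan is to reduce the normality of $AB$ to a single identity between the two self-adjoint operators $(AB)^*(AB)$ and $(AB)(AB)^*$, each of which I would compute exactly. First I would note that $AB$ is closed, being the product of the closed operator $A$ with the everywhere-defined bounded operator $B$; hence by Theorem \ref{adjoints and closedness AB basic}(3) both $(AB)^*(AB)$ and $(AB)(AB)^*$ are self-adjoint. The whole argument then rests on proving, for \emph{every} $\lambda\in\C^*$, the two identities $(AB)(AB)^*=A^*A\,B^*B$ and $(AB)^*(AB)=\frac{1}{|\lambda|^2}A^*A\,B^*B$. Granting these, $AB$ is normal iff the two agree, i.e. iff $\frac{1}{|\lambda|^2}=1$; since $A^*A\,B^*B\neq 0$ (because $AB\neq 0$ forces $(AB)(AB)^*\neq 0$), this is exactly $|\lambda|=1$, which delivers both implications simultaneously.

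To obtain the two identities I would first manufacture the Fuglede--Putnam companions of the hypothesis. Writing $BA\subset(\lambda A)B$ and applying Theorem \ref{Fuglede-Putnam} with the bounded operator $B$ and the normal operators $A$ and $\lambda A$ yields $BA^*\subset\overline{\lambda}A^*B$. Taking the adjoint of $BA\subset\lambda AB$ and using $(BA)^*=A^*B^*$ from Theorem \ref{adjoints and closedness AB basic}(1) gives $(AB)^*\subset\frac{1}{\overline{\lambda}}A^*B^*$, while the general inclusion $B^*A^*\subset(AB)^*$ always holds; combining the last two produces $B^*A^*\subset\frac{1}{\overline{\lambda}}A^*B^*$, and one further application of Theorem \ref{Fuglede-Putnam} (now with the bounded operator $B^*$) gives $B^*A\subset\frac{1}{\lambda}AB^*$.

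With these relations in hand I would chain the inclusions. Using $(AB)^*\subset\frac{1}{\overline{\lambda}}A^*B^*$ followed by $B^*A\subset\frac{1}{\lambda}AB^*$ gives $(AB)^*(AB)\subset\frac{1}{|\lambda|^2}A^*A\,B^*B$, and using $(AB)^*\subset\frac{1}{\overline{\lambda}}A^*B^*$ followed by $BA^*\subset\overline{\lambda}A^*B$ gives $(AB)(AB)^*\subset AA^*BB^*$; normality of $A$ and $B$ then lets me rewrite $AA^*BB^*=A^*A\,B^*B$. Since $A^*A$ and $B^*B$ are self-adjoint and the left-hand sides are self-adjoint, Proposition \ref{Devinatz-Nussbaum-von Neumann: T=T1T2} upgrades both inclusions to the equalities recorded in the first paragraph (applied to the self-adjoint factors $A^*A$ and $\frac{1}{|\lambda|^2}B^*B$ in the first case, and $A^*A$ and $B^*B$ in the second).

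The main obstacle is the adjoint bookkeeping for the unbounded factor: because $A$ is unbounded, $(AB)^*$ is in general strictly larger than $B^*A^*$, so one cannot simply set $(AB)^*=B^*A^*$ and replay the bounded computation of Theorem \ref{commu lambda all bounded implies normality iff lambda anything}. This is exactly what forces the two-sided squeeze $B^*A^*\subset(AB)^*\subset\frac{1}{\overline{\lambda}}A^*B^*$ and makes Proposition \ref{Devinatz-Nussbaum-von Neumann: T=T1T2}, rather than a direct manipulation, the right instrument for turning the inclusions into equalities. Alongside this I would need to verify the standing hypotheses that all the products are densely defined and that $AB$ (hence $A^*A$, $B^*B$ and the sandwiched products) are closed so that Theorem \ref{adjoints and closedness AB basic}(3) and Proposition \ref{Devinatz-Nussbaum-von Neumann: T=T1T2} genuinely apply, and finally to confirm $A^*A\,B^*B\neq 0$, which follows from $AB\neq 0$ through the identity $(AB)(AB)^*=A^*A\,B^*B$.
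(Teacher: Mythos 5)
Your proposal is correct and follows essentially the same route as the paper: Fuglede--Putnam to generate the companion relations, a two-sided computation of $(AB)^*(AB)$ and $(AB)(AB)^*$, and Proposition \ref{Devinatz-Nussbaum-von Neumann: T=T1T2} to upgrade the inclusions between self-adjoint operators to equalities, yielding normality of $AB$ precisely when $|\lambda|=1$. The only cosmetic difference is that you obtain the upper inclusions directly from $(AB)^*\subset\frac{1}{\overline{\lambda}}A^*B^*$, whereas the paper first derives the reverse inclusions and flips them by taking adjoints; your explicit justification of the ``only if'' direction (ruling out $A^*AB^*B=0$ via $AB\neq 0$) is in fact slightly more careful than the paper's.
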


\begin{proof}
First, and since $A$ is closed and $B$ is bounded, $AB$ is
automatically closed.

Since $A$ is normal, so is $\lambda A$. Hence the Fuglede-Putnam
(see e.g. \cite{CON}) theorem gives
\[BA\subset \lambda AB\Longrightarrow BA^*\subset \overline{\lambda} A^*B \text{ or }\lambda B^*A\subset AB^*.\]
Using the above "inclusions" we have on the one hand
\begin{align*}
(AB)^*AB&\supset B^*A^*AB\\
&=B^*AA^*B ~\text{(since $A$ is normal)}\\
&\supset \frac{1}{\overline{\lambda}}B^*ABA^*\\
&\supset \frac{1}{\overline{\lambda}\lambda}B^*BAA^*\\
&=\frac{1}{|\lambda|^2}B^*BAA^*.
\end{align*}
Since $A$ and $AB$ are closed, and $B$ is bounded, all of
$(AB)^*AB$, $A^*A$ and $B^*B$ are self-adjoint so that "adjointing"
the previous inclusion yields
\[(AB)^*AB\subset \frac{1}{|\lambda|^2}AA^*B^*B.\]
As $|\lambda|$ is real, the conditions of Proposition
\ref{Devinatz-Nussbaum-von Neumann: T=T1T2} are met and we finally
obtain
\begin{equation}\label{equation Ab*Ab 1 B bounded}
(AB)^*AB=\frac{1}{|\lambda|^2}AA^*B^*B.
\end{equation}

On the other hand, we may write
\begin{align*}
AB(AB)^*&\supset ABB^*A^*\\
&=AB^*BA^* ~\text{(because $B$ is normal)}\\
&\supset \lambda B^*ABA^*\\
&=B^*(\lambda AB)A^*\\
&\supset B^*BAA^*.
\end{align*}
As above, we obtain
\[AA^*B^*B\supset AB(AB)^*\]
and by Proposition \ref{Devinatz-Nussbaum-von Neumann: T=T1T2} we
end up with
\begin{equation}\label{equation Ab*Ab 2 B bounded}
AB(AB)^*=AA^*B^*B.
\end{equation}
Accordingly, we clearly see that $AB$ is normal iff $|\lambda|=1$,
completing the proof.
\end{proof}

\begin{cor}
Let $A$ and $B$ be two normal operators where $B$ is bounded. If
$BA\subset \lambda AB\neq 0$ where $|\lambda|=1$, then
\[\overline{BA}=\lambda AB,\]
where $\overline{BA}$ denotes the closure of the operator $BA$.
\end{cor}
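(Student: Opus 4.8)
The plan is to prove the two inclusions $\overline{BA}\subseteq \lambda AB$ and $\lambda AB\subseteq \overline{BA}$ separately, the second being the substantial one. The first is immediate: since $|\lambda|=1$, Theorem \ref{commu lambda unbounded one implies normality iff lambda unit circle} tells us that $AB$ is normal and in particular closed, so $\lambda AB$ is a closed operator; as $\lambda AB$ extends $BA$ and the closure $\overline{BA}$ is the smallest closed extension of $BA$, we get $\overline{BA}\subseteq \lambda AB$. For the reverse inclusion I would not argue at the level of domains directly; instead I would aim for the cleaner statement $(\lambda AB)^*=(BA)^*$, from which, upon taking adjoints and using that $\lambda AB$ is closed, $\lambda AB=(\lambda AB)^{**}=(BA)^{**}=\overline{BA}$ follows at once.

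To establish $(\lambda AB)^*=(BA)^*$, first record $(BA)^*=A^*B^*$ from Theorem \ref{adjoints and closedness AB basic}(1) (here $B$ is the bounded factor), and note $(\lambda AB)^*=\overline{\lambda}(AB)^*$. Since $BA\subseteq\lambda AB$ gives $(\lambda AB)^*\subseteq (BA)^*$, i.e.\ $(AB)^*\subseteq \lambda A^*B^*$, it suffices to promote this inclusion to an equality. The engine I would use is the maximality of normal operators: if $M\subseteq N$ with both $M$ and $N$ normal, then $M=N$ (indeed $M\subseteq N$ forces $N^*\subseteq M^*$, whence $D(N)=D(N^*)\subseteq D(M^*)=D(M)\subseteq D(N)$, so the domains coincide and $M=N$). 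Thus it is enough to check that both $(AB)^*$ and $\lambda A^*B^*$ are normal. The operator $(AB)^*$ is normal because $AB$ is. For $\lambda A^*B^*$, I would apply the Fuglede--Putnam theorem \ref{Fuglede-Putnam} twice to the hypothesis $BA\subseteq\lambda AB$, exactly as in the proof of Theorem \ref{commu lambda unbounded one implies normality iff lambda unit circle}, obtaining first $\lambda B^*A\subseteq AB^*$ and then, from $B^*A\subseteq\overline{\lambda}AB^*$, the adjoint-pair relation $B^*A^*\subseteq \lambda A^*B^*$. Applying Theorem \ref{commu lambda unbounded one implies normality iff lambda unit circle} to the normal operators $A^*$ (unbounded) and $B^*$ (bounded) with factor $\lambda$ (note $|\lambda|=1$, and $A^*B^*\neq 0$ since $AB\neq 0$) then yields that $A^*B^*$, hence $\lambda A^*B^*$, is normal.

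With both operators normal and nested, maximality gives $(AB)^*=\lambda A^*B^*$, that is $(\lambda AB)^*=A^*B^*=(BA)^*$; taking adjoints and using that $\lambda AB$ is closed completes the proof. I expect the main obstacle to be packaging the maximality principle cleanly, since it is the one ingredient not already isolated in the preliminaries, and making sure the Fuglede--Putnam bookkeeping genuinely delivers $B^*A^*\subseteq\lambda A^*B^*$ (rather than merely one of the two intermediate inclusions) so that Theorem \ref{commu lambda unbounded one implies normality iff lambda unit circle} applies verbatim to the adjoint pair. The domain subtlety that $(AB)^*\neq B^*A^*$ in general is exactly what the maximality step finesses.
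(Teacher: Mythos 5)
Your argument is correct and follows essentially the same route as the paper's: both establish the normality of $AB$ and of $A^*B^*=(BA)^*$ by applying Theorem \ref{commu lambda unbounded one implies normality iff lambda unit circle} to the pair $(A,B)$ and then to the adjoint pair $(A^*,B^*)$, and both conclude by the maximality of normal operators. The only cosmetic differences are that you derive the key inclusion $B^*A^*\subseteq\lambda A^*B^*$ via two applications of Fuglede--Putnam where the paper gets it in one line by adjointing $BA\subset\lambda AB$ (using $(BA)^*=A^*B^*$), and that you invoke maximality at the level of the adjoints, $(AB)^*\subseteq\lambda A^*B^*$, rather than at the level of $\overline{BA}\subseteq\lambda AB$ as the paper does.
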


\begin{proof}
Since $BA\subset \lambda AB$ and $|\lambda|=1$, $AB$ is normal by
Theorem \ref{commu lambda unbounded one implies normality iff lambda
unit circle}.

Since $B$ is bounded and $A$ is densely defined, $BA$ too is densely
defined and so it has a unique adjoint. Hence
\[BA\subset \lambda AB\Longrightarrow \overline{\lambda} B^*A^*=B^*(\lambda A)^*\subset (\lambda AB)^*\subset (BA)^*=A^*B^*\]
or
\[B^*A^*\subset \frac{1}{\overline{\lambda}}A^*B^*.\]

Since $B^*$ is bounded and $|\frac{1}{\overline{\lambda}}|=1$,
Theorem \ref{commu lambda unbounded one implies normality iff lambda
unit circle} applies again and yields the normality of $A^*B^*$.
Whence $(BA)^*$ is normal as $(BA)^*=A^*B^*$ and so $(BA)^{**}$ is
normal. Now since $\lambda AB$ is normal, it is closed so that
$BA\subset \lambda AB$ tells us that $BA$ is closeable. Therefore,
$\overline{BA}=(BA)^{**}$. It follows that $\overline{BA}$ is
normal, and that
\[BA\subset \overline{BA}\subset \lambda AB.\]

Finally, as normal operators are maximally normal, we obtain that
\[\overline{BA}=\lambda AB,\]
establishing the result.
\end{proof}

\begin{pro}\label{commutatvity up to fcator unbounded implies lambda unit circle
proposition} Let $A$ and $B$ be two self-adjoint operators where $B$
is bounded. Assume that $BA\subset \lambda AB\neq 0$ where
$\lambda\in \C$. Then $AB$ is normal for any $\lambda$.
\end{pro}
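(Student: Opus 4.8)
The plan is to reduce everything to the two results just established, since self-adjoint operators are in particular normal. Here $A$ and $B$ are both normal, $B$ is bounded and self-adjoint, and $BA\subset\lambda AB\neq 0$. This places us simultaneously in the hypotheses of Theorem \ref{mortad ZAA unbounded} and of Theorem \ref{commu lambda unbounded one implies normality iff lambda unit circle}, so the proof will consist of chaining these two together.

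The key observation is that Theorem \ref{commu lambda unbounded one implies normality iff lambda unit circle} already delivers the normality of $AB$, but only under the side condition $|\lambda|=1$. Hence the entire task reduces to showing that the extra self-adjointness of $B$ forces $|\lambda|=1$. I would extract this from Theorem \ref{mortad ZAA unbounded}(3): with $A$ normal, $B$ self-adjoint, and $BA\subset\lambda AB\neq 0$, that theorem yields $\lambda\in\{-1,1\}$, and in particular $|\lambda|=1$.

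Having secured $|\lambda|=1$, I would simply invoke Theorem \ref{commu lambda unbounded one implies normality iff lambda unit circle} once more: $A$ and $B$ are normal, $B$ is bounded, $BA\subset\lambda AB\neq 0$, and $|\lambda|=1$, so $AB$ is normal. Since $\lambda$ was an arbitrary admissible scalar and the self-adjointness hypothesis has pinned it to the unit circle, the conclusion holds for every such $\lambda$, which is exactly the claim.

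I do not expect a genuine obstacle here; the content lies entirely in recognizing that the self-adjointness of $B$ (together with normality of $A$) is precisely what upgrades the factor to the unit circle. The only point requiring a word of care is the edge case in which $A$ happens to be bounded, since Theorem \ref{mortad ZAA unbounded} is phrased for unbounded $A$. In that situation one falls back on Theorem \ref{theorem BBP}(2), which gives $\lambda\in\{-1,1\}$ for two bounded self-adjoint operators, so that $|\lambda|=1$ still holds and the final application of Theorem \ref{commu lambda unbounded one implies normality iff lambda unit circle} goes through unchanged.
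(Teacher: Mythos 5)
Your reduction is clean in outline, but it hinges on Theorem \ref{mortad ZAA unbounded}(3) in a way that undermines it in the context of this paper. Two problems. First, the authors explicitly remark (right after Corollary \ref{corollary unbounded B asymmetric spectrum}) that the proof of Theorem \ref{mortad ZAA unbounded} in the original paper ``contained a small error'' which is only fixed by the results of the present section, so that theorem cannot safely be imported as a black box here. Second, and more seriously, the paper later derives $\lambda\in\{-1,1\}$ (the final corollary of the unbounded subsection) as a consequence of this very Proposition, through the chain Proposition $\rightarrow$ Corollary \ref{corollary unbounded B asymmetric spectrum} $\rightarrow$ Corollary \ref{corollary unbounded B positive} $\rightarrow$ $\lambda\in\{-1,1\}$. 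If the Proposition is itself proved by first importing $\lambda\in\{-1,1\}$ from Theorem \ref{mortad ZAA unbounded}(3), the whole development becomes circular: the only sound derivation of $|\lambda|=1$ available in the paper passes through the statement you are trying to prove. The phrase ``for any $\lambda$'' in the statement is precisely the signal that the intended argument must not presuppose any constraint on the factor.

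The paper's own proof avoids this entirely. Since $A$ and $B$ are self-adjoint (not merely normal), taking adjoints of $BA\subset\lambda AB$ yields the additional inclusions $\lambda BA\subset AB$ and $\overline{\lambda}BA\subset AB$, and rerunning the computation from the proof of Theorem \ref{commu lambda unbounded one implies normality iff lambda unit circle} with these in hand gives
\[(AB)^*AB\supset |\lambda|^2B^2A^2 \quad\text{and}\quad AB(AB)^*\supset |\lambda|^2B^2A^2,\]
with the \emph{same} right-hand side in both inclusions --- unlike the merely normal case, where one side carries a factor $1/|\lambda|^2$ and the other does not, forcing $|\lambda|=1$. Proposition \ref{Devinatz-Nussbaum-von Neumann: T=T1T2} then upgrades both inclusions to equalities, and $AB$ is normal with no condition on $|\lambda|$ whatsoever. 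If you want to salvage your approach, you would need an independent proof that $|\lambda|=1$ under these hypotheses (your fallback to Theorem \ref{theorem BBP}(2) covers only the case where $A$ is also bounded), and at that point you would essentially be redoing the work that the direct computation accomplishes anyway.
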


\begin{proof}
Since $A$ and $B$ are self-adjoint, $BA\subset \lambda AB$ implies
the following three "inclusions"
\[BA\subset \overline{\lambda} AB,~\lambda BA\subset AB \text{ and } \overline{\lambda} BA\subset AB.\]
Proceeding as in the proof of Theorem \ref{commu lambda unbounded
one implies normality iff lambda unit circle}, we obtain
\[(AB)^*AB\supset |\lambda|^2B^2A^2 \text{ and } AB(AB)^*\supset |\lambda|^2B^2A^2.\]
Again as in the proof of Theorem \ref{commu lambda unbounded one
implies normality iff lambda unit circle}, $AB$ is normal.
\end{proof}

Now we apply the foregoing results to give spectral properties of
$\lambda$-commuting self-adjoint operators (in the unbounded case).

\begin{cor}\label{corollary unbounded B asymmetric spectrum}
Let $A$ and $B$ be self-adjoint operators where $B$ is bounded.
Assume that $BA\subset \lambda AB\neq 0$ where $\lambda\in \C$. If
further $\sigma(B)\cap \sigma (-B)\subseteq \{0\}$, then
$\lambda=1$.
\end{cor}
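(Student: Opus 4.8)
The plan is to follow the same route as the bounded corollary above, turning each equality into an operator inclusion and watching the domains carefully. First I would apply Proposition~\ref{commutatvity up to fcator unbounded implies lambda unit circle proposition}: since $A$ and $B$ are self-adjoint, $B$ is bounded and $BA\subset\lambda AB\neq 0$, the product $AB$ is normal for any $\lambda$. Then, because only $B$ is bounded and the hypothesis $\sigma(B)\cap\sigma(-B)\subseteq\{0\}$ is in force, part (2) of Theorem~\ref{AB normal implying AB self-adjoint} promotes this to self-adjointness, that is $(AB)^*=AB$.

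Next I would compute $(BA)^*$. As $B$ is bounded and $A$ is densely defined, $BA$ is densely defined (indeed $D(BA)=D(A)$), so it has a well-defined adjoint, and part (1) of Theorem~\ref{adjoints and closedness AB basic} gives $(BA)^*=A^*B^*=AB$ (using $A^*=A$ and $B^*=B$). Taking adjoints in the inclusion $BA\subset\lambda AB$ reverses it, so $(\lambda AB)^*\subset(BA)^*$. Since $\lambda\neq 0$ and $AB$ is self-adjoint, the left-hand side equals $\overline{\lambda}(AB)^*=\overline{\lambda}\,AB$, while the right-hand side is $AB$; hence
\[\overline{\lambda}\,AB\subset AB.\]

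To finish, I would exploit that scaling by a nonzero constant leaves the domain unchanged: $\overline{\lambda}\,AB$ and $AB$ both have domain $D(AB)$, so the inclusion above is in fact an equality on $D(AB)$, i.e. $\overline{\lambda}\,ABx=ABx$ for every $x\in D(AB)$. Because $AB\neq 0$, there is some $x$ with $ABx\neq 0$, which forces $\overline{\lambda}=1$ and therefore $\lambda=1$.

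The step I expect to demand the most care is this last one: in the unbounded setting one cannot cancel operators freely, and the whole conclusion rests on the small but essential observation that multiplication by $\overline{\lambda}\neq 0$ preserves $D(AB)$, so that the adjoint inclusion collapses to a genuine equality from which the scalar can be read off. I would also verify at the outset that the hypotheses of Theorem~\ref{AB normal implying AB self-adjoint}(2) genuinely hold (normality of $AB$ coming from Proposition~\ref{commutatvity up to fcator unbounded implies lambda unit circle proposition}, together with the spectral condition on $B$), since the entire chain depends on $AB$ being self-adjoint.
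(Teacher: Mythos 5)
Your proof is correct and follows essentially the same route as the paper: Proposition~\ref{commutatvity up to fcator unbounded implies lambda unit circle proposition} gives normality, Theorem~\ref{AB normal implying AB self-adjoint}(2) upgrades it to self-adjointness, and taking adjoints in $BA\subset\lambda AB$ together with the observation that $D(\alpha AB)=D(AB)$ for $\alpha\neq 0$ forces $\lambda=1$. If anything, your version is slightly more careful than the paper's, which writes the resulting inclusion with $\frac{1}{\lambda}$ where the conjugate $\overline{\lambda}$ ought to appear (immaterial for the conclusion).
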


\begin{rema}
The previous result generalizes 2) of Theorem \ref{mortad ZAA
unbounded}. Besides the proof of Theorem \ref{mortad ZAA unbounded}
contained a small error which, by the present result, has now been
fixed.
\end{rema}

\begin{proof}
By Proposition \ref{commutatvity up to fcator unbounded implies
lambda unit circle proposition}, $AB$ is normal. Thanks to the
condition on the spectrum of $B$ and Theorem \ref{AB normal implying
AB self-adjoint} we get that $AB$ is self-adjoint. Hence $(AB)^*=AB$
so that
\[AB=(AB)^*\subset \frac{1}{\lambda}AB.\]
But $D(AB)=D(\alpha AB)$ for any $\alpha\neq 0$. Therefore,
\[AB=\frac{1}{\lambda}AB \text{ or merely } \lambda=1.\]
\end{proof}

\begin{cor}\label{corollary unbounded B positive}
Let $A$ and $B$ be self-adjoint operators where $B$ is bounded.
Assume that $BA\subset \lambda AB\neq 0$ where $\lambda\in \C$. Then
$\lambda=1$ if $B$ (or $-B$) is positive.
\end{cor}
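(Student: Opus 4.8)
The plan is to reduce the statement to the immediately preceding Corollary \ref{corollary unbounded B asymmetric spectrum}, whose hypothesis is precisely the spectral asymmetry condition $\sigma(B)\cap\sigma(-B)\subseteq\{0\}$. Since $B$ is a bounded self-adjoint operator, its positivity (respectively the positivity of $-B$) is equivalent to a one-sided localization of its spectrum on the real line, so the bulk of the argument is simply to observe that positivity forces exactly that asymmetry.

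Concretely, I would first recall that for a bounded self-adjoint $B$ positivity means $\sigma(B)\subseteq[0,\infty)$, while for a self-adjoint operator the spectral mapping of $x\mapsto -x$ gives $\sigma(-B)=-\sigma(B)$. Hence if $B$ is positive then $\sigma(-B)\subseteq(-\infty,0]$, and intersecting yields
\[\sigma(B)\cap\sigma(-B)\subseteq[0,\infty)\cap(-\infty,0]=\{0\}.\]
Symmetrically, if $-B$ is positive then $\sigma(B)\subseteq(-\infty,0]$ and $\sigma(-B)\subseteq[0,\infty)$, so again the intersection is contained in $\{0\}$. In either of the two cases the spectral hypothesis of Corollary \ref{corollary unbounded B asymmetric spectrum} is met.

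With that observation in hand, I would invoke Corollary \ref{corollary unbounded B asymmetric spectrum} directly: the assumptions $A,B$ self-adjoint, $B$ bounded, $BA\subset\lambda AB\neq 0$ together with $\sigma(B)\cap\sigma(-B)\subseteq\{0\}$ give $\lambda=1$, which is exactly the conclusion sought. There is no genuine obstacle here; the only point to be slightly careful about is the elementary spectral bookkeeping for $-B$, namely that $\sigma(-B)=-\sigma(B)$ for self-adjoint $B$, but this is standard. Thus the corollary is an immediate specialization of the previous one, positivity being merely a convenient sufficient condition guaranteeing the required asymmetry of the spectrum.
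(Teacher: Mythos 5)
Your proposal is correct and matches the paper's intent: the paper states this corollary without proof precisely because, as you observe, positivity of $B$ (or of $-B$) forces $\sigma(B)\cap\sigma(-B)\subseteq\{0\}$ via $\sigma(-B)=-\sigma(B)$, so Corollary \ref{corollary unbounded B asymmetric spectrum} applies verbatim. Nothing further is needed.
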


\begin{cor}
Let $A$ and $B$ be self-adjoint operators where $B$ is bounded.
Assume that $BA\subset \lambda AB\neq 0$ where $\lambda\in \C$. Then
$\lambda \in\{-1,1\}$.
\end{cor}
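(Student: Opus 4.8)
The plan is to imitate the proof of the bounded counterpart established above, where one squares a self-adjoint factor to manufacture a positive operator and then appeals to the positive-case result. Since Corollary \ref{corollary unbounded B positive} requires the \emph{bounded} factor to be positive, the operator I must square is $B$: I would work with $B^2$, which is bounded, self-adjoint and positive. The first step is therefore to promote the hypothesis $BA\subset\lambda AB$ to the relation $B^2A\subset\lambda^2AB^2$.

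To obtain this, I would take $x\in D(B^2A)=D(A)$ and apply the given inclusion twice. The inclusion $BA\subset\lambda AB$ yields $D(A)\subseteq D(AB)$, so $Bx\in D(A)$, and applying this fact once more gives $B^2x\in D(A)$; hence $x\in D(AB^2)$ and the domains are compatible. On vectors one then computes $B^2Ax=\lambda\,B(ABx)=\lambda\,BA(Bx)=\lambda^2AB^2x$, each application of the inclusion being legitimate because the vector it is applied to has already been shown to lie in $D(A)$. This step demands the most careful bookkeeping, but it is routine rather than a genuine difficulty.

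The point where I expect the only real subtlety is verifying the nonvanishing $\lambda^2AB^2\neq0$ needed to invoke the corollary, i.e. $AB^2\neq0$. I would argue by contradiction: if $AB^2=0$, then the inclusion just proved forces $B^2A=0$, so $\ran A\subseteq\ker B^2=\ker B$ (the equality of kernels holding because $B$ is self-adjoint), whence $BA=0$; but then part (1) of Theorem \ref{adjoints and closedness AB basic} gives $AB=A^*B^*=(BA)^*=0$, contradicting $AB\neq0$. With $AB^2\neq0$ secured, the pair $(A,B^2)$ satisfies $B^2A\subset\lambda^2AB^2\neq0$ with $B^2$ bounded, self-adjoint and positive, so Corollary \ref{corollary unbounded B positive} applies with factor $\lambda^2$ and yields $\lambda^2=1$. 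Hence $\lambda\in\{-1,1\}$, which is the claim.
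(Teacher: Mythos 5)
Your proposal is correct and follows essentially the same route as the paper: the paper's proof is the one-line chain $B^2A\subset \lambda BAB\subset \lambda^2 AB^2$ followed by an appeal to Corollary \ref{corollary unbounded B positive} with the positive bounded operator $B^2$. Your additional domain bookkeeping and the verification that $AB^2\neq 0$ are sound refinements of details the paper leaves implicit.
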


\begin{proof}
We may write
\[B^2A\subset \lambda BAB\subset \lambda^2 AB^2.\]
Since $B$ is self-adjoint, $B^2$ is positive so that Corollary
\ref{corollary unbounded B positive} applies and gives $\lambda^2=1$
or $\lambda \in\{-1,1\}$.
\end{proof}

We finish this paper with the case of two unbounded operators. As
should have been expected, this case is quite delicate to handle
unless strong assumptions are made. But first, we start by a version
of the Fuglede-Putnam theorem.

\begin{thm}\label{Fuglede-Putnam all unbounded invertible}
Let $A$, $N$ and $M$ be three unbounded invertible operators on a
Hilbert space such that $N$ and $M$ are normal. If $AN=MA$, then
\[A^*M=NA^* \text{ and } AN^*= M^*A.\]
\end{thm}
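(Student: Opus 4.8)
The plan is to reduce the statement to the classical bounded Fuglede--Putnam theorem (Theorem \ref{Fuglede-Putnam}) by passing to inverses, thereby trading the three unbounded operators for everywhere defined bounded ones. Since $A$, $N$ and $M$ are invertible, the inverses $A^{-1}$, $N^{-1}$ and $M^{-1}$ are bounded and everywhere defined, by the standing convention fixed in the introduction. Moreover, the inverse of an invertible normal operator is again normal: from $N^*N=NN^*$ one reads off
\[N^{-1}(N^{-1})^*=(N^*N)^{-1}=(NN^*)^{-1}=(N^{-1})^*N^{-1},\]
so $N^{-1}$ is a bounded normal operator, and likewise $M^{-1}$. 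This is precisely the configuration to which Theorem \ref{Fuglede-Putnam} applies, with $A^{-1}$ serving as the bounded intertwiner.

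First I would invert the hypothesis. Because $AN$ and $MA$ are equal invertible operators, their inverses coincide, and these inverses are computed in the obvious way, giving the identity between bounded, everywhere defined operators
\[N^{-1}A^{-1}=(AN)^{-1}=(MA)^{-1}=A^{-1}M^{-1}.\]
Reading this as $A^{-1}M^{-1}=N^{-1}A^{-1}$ and applying Theorem \ref{Fuglede-Putnam} to the bounded operator $A^{-1}$ and the bounded normal operators $M^{-1}$ and $N^{-1}$ (all bounded, so the conclusion is a genuine equality) yields
\[A^{-1}(M^{-1})^*=(N^{-1})^*A^{-1},\quad\text{that is,}\quad A^{-1}(M^*)^{-1}=(N^*)^{-1}A^{-1},\]
where I use $(M^{-1})^*=(M^*)^{-1}$ and $(N^{-1})^*=(N^*)^{-1}$. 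Inverting this last equation returns $M^*A=AN^*$, which is exactly the second asserted identity $AN^*=M^*A$.

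For the first identity I would then take adjoints of $AN^*=M^*A$. Since $N^*$ and $A$ are invertible, Lemma \ref{(AB)*=B*A*} applies to each product and delivers the adjoints as exact equalities rather than the inclusions one would otherwise obtain, so that
\[NA^*=(N^*)^*A^*=(AN^*)^*=(M^*A)^*=A^*(M^*)^*=A^*M,\]
using also $N^{**}=N$, which holds because $N$, being normal, is closed and densely defined. This gives the remaining identity $A^*M=NA^*$.

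The only delicate aspect throughout is the book-keeping of domains and adjoints for unbounded operators, and this is precisely what the invertibility hypothesis is meant to neutralize: it lets me pass to everywhere defined bounded inverses, so that Theorem \ref{Fuglede-Putnam} produces clean equalities and the reverse passage by inversion is unproblematic, and it lets Lemma \ref{(AB)*=B*A*} compute $(AN^*)^*$ and $(M^*A)^*$ as exact products rather than the inclusions that part (2) of Theorem \ref{adjoints and closedness AB basic} would give in general. I expect the main thing requiring care to be the verification of the elementary adjoint-of-a-product and inverse-of-an-adjoint identities $(M^{-1})^*=(M^*)^{-1}$ and $N^{**}=N$, none of which is deep once invertibility and normality are in hand.
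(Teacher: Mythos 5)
Your proof is correct, and it is a genuine streamlining of the paper's argument rather than a reproduction of it. The paper applies the Fuglede--Putnam theorem twice: first in its unbounded (inclusion) form to $A^{-1}M\subset NA^{-1}$, which only yields $M^*A\subset AN^*$ and, after adjoints, $NA^*\subset A^*M$; and then a second time in the all-bounded form to $N^{-1}A^{-1}=A^{-1}M^{-1}$ to obtain the reverse inclusions $AN^*\subset M^*A$ and $A^*M\subset NA^*$, the two halves being combined to produce the equalities. You bypass the first half entirely: you invert $AN=MA$ at the outset to get the bounded identity $N^{-1}A^{-1}=A^{-1}M^{-1}$, apply the bounded Fuglede--Putnam theorem once to get the equality $A^{-1}(M^*)^{-1}=(N^*)^{-1}A^{-1}$, and then observe that equal injective bounded operators have equal inverses, which delivers $AN^*=M^*A$ as an equality in one stroke; the other identity then follows by adjoints via Lemma \ref{(AB)*=B*A*}, exactly as in the paper. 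The one point worth making explicit (and which you essentially do) is why inverting $A^{-1}(M^*)^{-1}=(N^*)^{-1}A^{-1}$ gives equality of the unbounded products including their domains: the domain of $M^*A$ coincides with the range of $A^{-1}(M^*)^{-1}$ and likewise for $AN^*$, so equality of the two bounded operators forces equality of ranges and hence of the inverse operators. Your route buys a shorter proof with a single application of the classical theorem and no juggling of opposite inclusions; the paper's route has the minor virtue of never needing to argue that an inverse of a bounded equality is again an equality of unbounded operators, since it only ever extracts inclusions and lets the two directions meet.
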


\begin{proof}
We have

\[AN=MA \Longrightarrow A^{-1}M\subset NA^{-1}.\]
Since $A^{-1}$ is bounded, by Theorem \ref{Fuglede-Putnam}, we have
$A^{-1}M^*\subset N^*A^{-1}$  and hence
\[M^*A\subset AN^*.\]
By taking adjoints (and applying Lemma \ref{(AB)*=B*A*}) we obtain
\[NA^*\subset A^*M.\]

Now since $A^{-1}M\subset NA^{-1}$, we can get $N^{-1}A^{-1}\subset
A^{-1}M^{-1}$. But all operators (in the previous inclusion) are
bounded. Therefore, we get

\[N^{-1}A^{-1}= A^{-1}M^{-1}.\]

Applying the Fuglede-Putnam theorem (the all-bounded-operators
version) once more yields
\[(N^{-1})^*A^{-1}= A^{-1}(M^{-1})^*.\]

Whence
\[(M^{-1})^*A\subset A (N^{-1})^* \text{, and thus } AN^*\subset
M^*A.\] By Lemma \ref{(AB)*=B*A*} again, $A^*M\subset NA^*$. Thus
$AN^*= M^*A$ and $A^*M=NA^*$, establishing the result.
\end{proof}

\begin{cor}
If $A$ and $B$ are two unbounded normal and invertible operators
such that $AB=\lambda BA$, then
\[A^*B=\overline{\lambda}BA^* \text{ and } AB^*=\overline{\lambda} B^*A.\]
\end{cor}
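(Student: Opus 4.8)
The plan is to read this off as a direct specialization of Theorem \ref{Fuglede-Putnam all unbounded invertible}, applied twice with two different bookkeeping choices of the normal factors. First I would record a preliminary observation: since $A$ and $B$ are invertible, so are the products $AB$ and $BA$, which are therefore nonzero, and hence $\lambda\neq 0$. Moreover, for any $\mu\in\C^*$ the scalar multiple $\mu A$ (resp. $\mu B$) is again normal and invertible, with inverse $\mu^{-1}A^{-1}$ (resp. $\mu^{-1}B^{-1}$). This is precisely what lets me feed scalar multiples of $A$ and $B$ into the hypotheses of Theorem \ref{Fuglede-Putnam all unbounded invertible}.

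For the relation $AB^*=\overline{\lambda}B^*A$, I would apply Theorem \ref{Fuglede-Putnam all unbounded invertible} to the identity $AB=(\lambda B)A$, taking the invertible operator to be $A$ and the two normal invertible operators to be $N=B$ and $M=\lambda B$. The hypothesis $AN=MA$ is then exactly $AB=\lambda BA$, so the conclusion $AN^*=M^*A$ reads $AB^*=(\lambda B)^*A=\overline{\lambda}B^*A$, which is the second asserted equality.

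For $A^*B=\overline{\lambda}BA^*$, I would first rewrite the hypothesis as $BA=\lambda^{-1}AB=(\lambda^{-1}A)B$ and then apply the theorem again, this time with the invertible operator being $B$ and the normal invertible operators being $N=A$ and $M=\lambda^{-1}A$. The hypothesis $BN=MB$ is now $BA=\lambda^{-1}AB$, and the conclusion $BN^*=M^*B$ gives $BA^*=(\lambda^{-1}A)^*B=\overline{\lambda}^{-1}A^*B$; rearranging yields $A^*B=\overline{\lambda}BA^*$, as desired.

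There is essentially no analytic obstacle here, since all the heavy lifting (domains, closures, and the passage from inclusion to equality) has already been absorbed into Theorem \ref{Fuglede-Putnam all unbounded invertible}. The only point requiring care is the conjugate bookkeeping: the adjoint of the factor $\lambda B$ contributes $\overline{\lambda}$, whereas the adjoint of $\lambda^{-1}A$ contributes $\overline{\lambda}^{-1}$, so the two applications must be set up with reciprocal scalars in order for both right-hand factors to come out as $\overline{\lambda}$. I would also remark that the two conclusions of the theorem not used above force $A^*B=\lambda^{-1}BA^*$ and $AB^*=\lambda^{-1}B^*A$; comparing these with the stated equalities shows $\lambda^{-1}=\overline{\lambda}$, so that $|\lambda|=1$ is in fact an automatic consequence here, consistent with part (3) of Theorem \ref{theorem Yang-Du}.
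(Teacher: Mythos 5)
Your proposal is correct and is exactly the intended derivation: the paper states this corollary without proof as an immediate specialization of Theorem \ref{Fuglede-Putnam all unbounded invertible}, and your two applications (with $N=B$, $M=\lambda B$ and then $N=A$, $M=\lambda^{-1}A$, after noting $\lambda\neq 0$ and that nonzero scalar multiples preserve normality and invertibility) match that reading. Your closing observation that the unused conclusions force $|\lambda|=1$ is a correct bonus consistent with the paper's surrounding results.
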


With Lemma \ref{(AB)*=B*A*} and Theorem \ref{Fuglede-Putnam all
unbounded invertible} in hand, we may just mimic the proof of
Yang-Du (in \cite{YANGDU}) to prove the following result:

\begin{thm}
Let $A$, $B$ two unbounded invertible operators such that
$AB=\lambda BA\neq{0}$ , $\lambda \in\mathbb{C}$. Then
\begin{enumerate}
  \item If $A$ or $B$ is self-adjoint, then $\lambda\in\mathbb{R}$.
  \item If either $A$ or $B$ is self-adjoint and the other is normal, then   $\lambda\in \{-1,
  1\}$.
  \item If both $A$ and $B$ are normal, then $|\lambda|=1$.
\end{enumerate}
\end{thm}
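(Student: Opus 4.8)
The plan is to leverage the unbounded Fuglede–Putnam theorem (Theorem \ref{Fuglede-Putnam all unbounded invertible}) together with the clean adjoint-of-product formula for invertible operators (Lemma \ref{(AB)*=B*A*}) so that all the manipulations one would do in the purely bounded Yang–Du argument go through verbatim, with honest equalities rather than mere inclusions. The key technical point that makes this work is invertibility: because $A$, $B$ (and hence $AB=\lambda BA$) are invertible, Lemma \ref{(AB)*=B*A*} upgrades every relation of the form $A^*B^*\subset(BA)^*$ to an \emph{equality} $(AB)^*=B^*A^*$, which removes the domain bookkeeping that otherwise plagues unbounded computations. So the strategy is: reduce each of the three items to a scalar identity obtained by computing a single quantity two different ways and reading off a constraint on $\lambda$.

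**For item (1),** suppose (say) $A$ is self-adjoint. First I would apply the Corollary preceding this theorem to the normal operators $A,B$ to get $A^*B=\overline{\lambda}BA^*$, i.e. $AB=\overline{\lambda}BA$ since $A^*=A$. Combining this with the hypothesis $AB=\lambda BA$ and the fact that $BA\neq 0$ forces $\lambda=\overline{\lambda}$, hence $\lambda\in\mathbb{R}$. The symmetric case where $B$ is self-adjoint is identical, using $AB^*=\overline{\lambda}B^*A$.

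**For item (2),** say $A$ is self-adjoint and $B$ is normal. By item (1) we already know $\lambda\in\mathbb{R}$, so $\overline{\lambda}=\lambda$ throughout. I would compute $A^2$-type relations: from $AB=\lambda BA$ one gets $ABA^{-1}=\lambda B$, and iterating or passing through adjoints via the Corollary yields a relation pinning $\lambda^2=1$. Concretely, from $AB=\lambda BA$ and $A^*B=AB=\overline{\lambda}BA=\lambda BA$ one sees the two expressions for $AB$ are consistent, so instead I would use the normality of $B$ together with $AB^*=\overline{\lambda}B^*A=\lambda B^*A$ and multiply the relations $AB=\lambda BA$ and $AB^*=\lambda B^*A$ to extract $|\lambda|^2=\lambda^2$, whence $\lambda\in\{-1,1\}$. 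For item (3), with both $A,B$ normal, the Corollary gives $A^*B=\overline{\lambda}BA^*$ and $AB^*=\overline{\lambda}B^*A$; forming $(AB)^*AB$ and $AB(AB)^*$ as in the proof of Theorem \ref{commu lambda all bounded implies normality iff lambda anything}—now legitimately, since invertibility makes every inclusion an equality—produces a factor $|\lambda|^2$ on one side and $1$ on the other, forcing $|\lambda|=1$.

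**The main obstacle** I anticipate is not any single deep step but the discipline of verifying, at each multiplication, that the composite operators are densely defined and that Lemma \ref{(AB)*=B*A*} genuinely applies (it requires the left factor's inverse to be everywhere-defined and bounded, which holds here precisely because we assumed invertibility in the sense fixed in the introduction). The whole point of the hypotheses is to make these checks automatic, so that the scalar algebra of Yang–Du can be transcribed; the delicate part is simply ensuring no step secretly needs boundedness of $A$ or $B$ themselves rather than of their inverses. Once that is confirmed, each of the three conclusions follows by the same cancellation $\lambda=\overline{\lambda}$, $\lambda^2=1$, or $|\lambda|=1$ as in the bounded theory.
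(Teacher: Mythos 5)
Your overall strategy---derive the intertwining relations from Theorem \ref{Fuglede-Putnam all unbounded invertible} and its Corollary, use Lemma \ref{(AB)*=B*A*} to take adjoints as honest equalities, and then run the scalar cancellations of Yang--Du---is exactly what the paper prescribes (it gives no more detail than ``mimic the proof of Yang--Du''). However, two of your three items contain genuine gaps. For item (2), the identity you propose to extract, $|\lambda|^2=\lambda^2$, is equivalent to $\lambda\in\R$, which is just item (1) again; it does not yield $\lambda^2=1$. The statement is true, but you need either to combine items (1) and (3), or to take adjoints of $AB^*=\overline{\lambda}B^*A$ via Lemma \ref{(AB)*=B*A*} to get $BA=\lambda AB$, which together with $AB=\lambda BA\neq 0$ gives $\lambda^2=1$ directly. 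For item (3), the computation of $(AB)^*AB$ and $AB(AB)^*$ does \emph{not} produce ``$|\lambda|^2$ on one side and $1$ on the other'': with the symmetric hypothesis $AB=\lambda BA$ and both derived relations $A^*B=\overline{\lambda}BA^*$, $AB^*=\overline{\lambda}B^*A$ available, both products come out equal to $|\lambda|^2B^*BAA^*$, exactly as in Theorem \ref{commu lambda all bounded implies normality iff lambda anything} and the Remark following it; the asymmetric factors $1/|\lambda|^2$ versus $1$ in Theorem \ref{commu lambda unbounded one implies normality iff lambda unit circle} come from the one-sided hypothesis $BA\subset\lambda AB$ there, not from normality. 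Moreover, even if the factors did differ, equating them presupposes that $AB$ is normal, which is not a hypothesis here. A working route for (3): take adjoints of $A^*B=\overline{\lambda}BA^*$ (Lemma \ref{(AB)*=B*A*} applies since $A^*$ and $B$ are invertible) to get $B^*A=\lambda AB^*$, i.e.\ $AB^*=\lambda^{-1}B^*A$; comparing with $AB^*=\overline{\lambda}B^*A$ and using injectivity of $B^*A$ gives $\overline{\lambda}=\lambda^{-1}$, i.e.\ $|\lambda|=1$.

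There is also a smaller flaw in item (1): you invoke the Corollary ``for the normal operators $A,B$,'' but in item (1) only one of the two is assumed self-adjoint and the other is an arbitrary invertible operator, so the Corollary's hypotheses are not met. The needed relation $A^*B=\overline{\lambda}BA^*$ still holds, but you must derive it directly from Theorem \ref{Fuglede-Putnam all unbounded invertible} by reading $BA=(\lambda^{-1}A)B$ with $B$ as the intertwiner and $N=A$, $M=\lambda^{-1}A$ as the normal pair; normality of the intertwiner is never required. With that citation repaired, your item (1) is correct.
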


\section{A Conjecture}
In Corollary \ref{corollary unbounded B asymmetric spectrum} (for
example), we said nothing about the spectrum of $A$. This is in fact
due to an (a natural) open question from \cite{Mortad-PAMS2003}
which the corresponding author of this paper has been working on
lately. Let us state it as a conjecture:
\begin{conj}
Let $A$ and $B$ be two self-adjoint operators such that only $B$ is
bounded and $A$ is positive. Then $AB$ is self-adjoint whenever $AB$
is normal.
\end{conj}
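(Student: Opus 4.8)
The plan is to reduce the conjecture to the already-available machinery by exploiting the positivity of $A$ in place of a spectral condition on $B$. The hypothesis is that $A$ and $B$ are self-adjoint, $B$ is bounded, $A$ is positive, and $AB$ is normal; we want to conclude $AB$ is self-adjoint. Since $A$ is positive and self-adjoint, it possesses a unique positive self-adjoint square root $A^{1/2}$, and the natural first move is to pass from the product $AB$ to the symmetrized-looking object $A^{1/2}BA^{1/2}$. The latter is formally self-adjoint (as a congruence of the self-adjoint operator $B$ by $A^{1/2}$), so if I can relate the normality of $AB$ to that of $A^{1/2}BA^{1/2}$ and then transfer self-adjointness back, the result should follow.

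First I would record the algebraic relation $AB = A^{1/2}(A^{1/2}B)$ and observe that $A^{1/2}BA^{1/2}$ and $AB = A^{1/2}(BA^{1/2})$ are unitarily equivalent or intertwined in a controlled way; concretely, when $A$ is boundedly invertible one has $A^{1/2}(AB)A^{-1/2} = A^{1/2}BA^{1/2}$, so $AB$ is normal iff $A^{1/2}BA^{1/2}$ is normal, and the latter, being symmetric, is then self-adjoint, whence $AB$ is self-adjoint by the same similarity. This handles the invertible case cleanly. Next I would address the general positive case, where $A$ may have $0$ in its spectrum and need not be invertible. The idea here is an approximation or reduction argument: write $A_\varepsilon = A + \varepsilon I$, which is strictly positive and boundedly invertible for $\varepsilon > 0$, apply the invertible case to the pair $(A_\varepsilon, B)$, and attempt to pass to the limit as $\varepsilon \to 0^+$. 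Alternatively, one can decompose $H$ along $\ker A \oplus \overline{\operatorname{ran} A}$ and argue separately on the kernel (where $A$ acts as $0$) and on the reducing complement where $A$ is injective with dense range.

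The hard part will be the limiting step and, more seriously, the domain bookkeeping in the noninvertible unbounded setting. When $A$ fails to be invertible, $A^{1/2}$ is likewise unbounded and non-invertible, so the intertwining identity $A^{1/2}(AB)A^{-1/2} = A^{1/2}BA^{1/2}$ no longer makes literal sense, and the products are only densely defined with delicate domains: normality of $AB$ is a statement about $(AB)^*AB$ versus $AB(AB)^*$ on their respective natural domains, and transferring this to $A^{1/2}BA^{1/2}$ requires justifying that the relevant domains coincide rather than merely include one another. The approximation $A + \varepsilon I$ does not obviously commute with the normality hypothesis on $AB$, since $(A+\varepsilon I)B$ need not be normal even when $AB$ is; so one cannot simply invoke the invertible case on the perturbed pair. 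This is precisely the obstruction that keeps the statement at the level of a conjecture: the positivity of $A$ gives the formal self-adjointness of the symmetrized product, but without a spectral gap or invertibility there is no evident way to promote the chain of inclusions $(AB)^*AB \supset \cdots$ and $AB(AB)^* \supset \cdots$ to genuine equalities via Proposition \ref{Devinatz-Nussbaum-von Neumann: T=T1T2} while simultaneously extracting self-adjointness. A successful proof will likely need either a refinement of Theorem \ref{AB normal implying AB self-adjoint} that trades the condition $\sigma(B)\cap\sigma(-B)\subseteq\{0\}$ for positivity of $A$, or a genuinely new spectral-theoretic input controlling the behavior of $AB$ near $0\in\sigma(A)$.
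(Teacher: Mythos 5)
The first thing to say is that this statement is not proved in the paper at all: it is stated explicitly as a conjecture, and the authors write that ``neither a proof nor a counterexample have been reached yet.'' So there is no proof of record to compare yours against, and your proposal --- which candidly ends by conceding that the general case remains open --- does not close the gap either. To the extent that your text is a diagnosis of why the problem is hard (no spectral gap at $0\in\sigma(A)$, domain bookkeeping for $A^{1/2}BA^{1/2}$, the failure of the perturbation $A+\varepsilon I$ to preserve normality of the product), it is broadly consistent with the authors' own remarks, although their stated obstruction is a different one: they point out that no known version of the Fuglede--Putnam theorem yields the inclusion $BA\subset AB$, which would suffice for self-adjointness of $AB$; they also note the conjecture does hold with $BA$ in place of $AB$, or if one additionally assumes $BA$ is closed.

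There is, however, a concrete error in the one case you claim to settle. For invertible $A$ you argue that $A^{1/2}(AB)A^{-1/2}=A^{1/2}BA^{1/2}$, hence ``$AB$ is normal iff $A^{1/2}BA^{1/2}$ is normal,'' and that self-adjointness of $A^{1/2}BA^{1/2}$ transfers back to $AB$ ``by the same similarity.'' Neither normality nor self-adjointness is a similarity invariant (unless the similarity is unitary), so both steps fail as written: a non-unitary similarity of a self-adjoint operator is generally not even normal. The invertible bounded case can be salvaged, but only by a different argument: $A^{1/2}BA^{1/2}$ is self-adjoint, hence has real spectrum; similarity preserves spectrum, so $\sigma(AB)\subseteq\R$; and a \emph{normal} operator with real spectrum is self-adjoint --- here one must use the hypothesis that $AB$ is normal, which your similarity argument never actually invokes. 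In the unbounded setting even this repair is delicate, since $A^{1/2}$ is unbounded and the conjugation $A^{1/2}(\cdot)A^{-1/2}$ is not a bona fide similarity. So the honest summary is: your partial result needs repair, and the main statement remains, as in the paper, an open conjecture rather than a theorem.
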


Neither a proof nor a counterexample have been reached yet. However,
we can state the following:
\begin{enumerate}
  \item Let $A=-f''$ be defined on $H^2(\R)$ (the Sobolev
  space, which is dense in $L^2(\R)$). Then $A$ is an unbounded self-adjoint, and positive operator in
  $L^2(\R)$. Let $B$ be a multiplication operator by an essentially bounded real-valued function
  $\varphi$ on $\R$. Hence $B$ is bounded and self-adjoint on
  $L^2(\R)$.

  This "counterexample" does not work for the conditions of the
  conjecture will force $\varphi$ to vanish (and so $AB=0)$.
  \item The conjecture is true with $BA$ in lieu of $AB$. But, the
  normality of $BA$ is stronger than that of $AB$ because $BA$ normal
  will then imply that $AB$ is normal too, and $AB=BA$! Details will
  appear in another paper.
  \item The conjecture is true if one assumes further that $BA$ is
  closed (in such case this will follow from the previous point).
  \item The conjecture seems to be a hard one. Indeed, the Fuglede
  (-Putnam) theorem is the tool par excellence when dealing with
  products involving normal (bounded or unbounded) operators.
  However, none of the
  known versions of the Fuglede theorem (such as \cite{FUG}, \cite{Mortad-PAMS2003} and \cite{Mortad-Fuglede-Putnam-CAOT-2012}) helps us in the
  proof to get $BA\subset AB$, a sufficient condition to make $AB$ self-adjoint.
\end{enumerate}

\section*{Acknowledgements}
The authors wish to thank both anonymous referees for their
meticulousness.

\bibliographystyle{amsplain}

\end{document}